\newtheorem{theorem}{Theorem}[section]
\newtheorem{proposition}[theorem]{Proposition}
\newtheorem{corollary}[theorem]{Corollary}
\newtheorem{definition}[theorem]{Definition}
\newtheorem{remark}[theorem]{Remark}
\newtheorem{example}[theorem]{Example}
\newenvironment{Example}{\begin{example}\em}{\end{example}}
\def\N{{\mathbb N}}
\def\Q{{\mathbb Q}}
\def\P{{\mathbb P}}
\def\Sym{{\mathbb S}}
\def\lcm{{\mathrm{lcm}}}
\def\ord{{\mathrm{ord}}}
\def\hN{{\hat{\N}}}
\def\Inc{{\mathrm{Inc}}}
\def\NF{{\mathrm{NF}}}
\def\Ker{{\mathrm{Ker\,}}}
\def\End{{\mathrm{End}}}
\def\Aut{{\mathrm{Aut}}}
\def\Mon{{\mathrm{Mon}}}
\def\END{{\mathrm{end}}}
\def\Gr{Gr\"obner}
\def\lm{{\mathrm{lm}}}
\def\lc{{\mathrm{lc}}}
\def\lt{{\mathrm{lt}}}
\def\LM{{\mathrm{LM}}}
\def\spoly{{\mathrm{spoly}}}
\def\Reduce{{\textsc{Reduce}}}
\def\SigmaGBasis{{\textsc{SigmaGBasis}}}
\begin{document}

\title[Noetherian quotients of the algebra of partial difference $\ldots$]
{Noetherian quotients of the algebra of partial difference polynomials
and Gr\"obner bases of symmetric ideals}

\author[V. Gerdt]{Vladimir Gerdt$^*$}

\address{$^*$ Laboratory of Information Technologies, JINR, 141980 Dubna, Russia}
\email{gerdt@jinr.ru}

\author[R. La Scala]{Roberto La Scala$^{**}$}

\address{$^{**}$ Department of Mathematics, University of Bari, via Orabona 4,
70125 Bari, Italy} \email{roberto.lascala@uniba.it}

\thanks{Both authors acknowledge the support of the University of Bari
and of the visiting program of Istituto Nazionale di Alta Matematica.
The author V.G. was also supported by the grant 13-01-00668 from the Russian
Foundation for Basic Research and by grant 3802.2012.2 from the Ministry
of Education and Science of the Russian Federation.}

\subjclass[2000] {Primary 12H10. Secondary 13P10, 13A50}

\keywords{Difference algebras; \Gr\ bases; Invariant ideals}

\maketitle

\begin{abstract}
In this paper we develop a \Gr\ bases theory for ideals of partial
difference polynomials with constant or non-constant coefficients.
In particular, we introduce a criterion providing the finiteness of
such bases when a difference ideal contains elements with suitable
linear leading monomials. This can be explained in terms of Noetherianity
of the corresponding quotient algebra. Among these Noetherian quotients
we find finitely generated polynomial algebras where the action of
suitable finite dimensional commutative algebras and in particular
finite abelian groups is defined. We obtain therefore a consistent
\Gr\ bases theory for ideals that possess such symmetries.
\end{abstract}

%% version
%%\begin{center} MAY 2014 \end{center}

%%%%%%%%%%%%%%%%%%%%%%%%%%%%%%%%%%%%%%%%%%%%%%%%%%%%%%%%%%%%%%%%%%%%%%

\section{Introduction}

The theory of difference algebras (see the books \cite{Co,KLMP,Le} and
references therein) was introduced in the 1930s by the mathematician
Joseph Fels Ritt at the same time as the theory of differential algebras.
Indeed, for a quite long time, difference algebras has attracted less
interest among researchers in comparison with differential ones
despite the fact that numerical integration of differential equations
relies on solving finite difference equations. The rapid development
of symbolic computation and computer algebra in the last decade of the
previous century gave rise to rather intensive algorithmic research in
differential algebras and to the creation of sophisticated software as the
{\em diffalg} library \cite{BH}, implementing the Rosenfeld-\Gr\ algorithm
and included in {\sc Maple} and the package {\sc LDA} \cite{GR12}.
At the same time, except for algorithmization and implementation in {\sc Maple}
of the shift algebra of linear operators \cite{Ch} as a part of the package
{\sc Ore\_algebra}, practically nothing has been developed in computer algebra
in relation to difference algebras. Nevertheless, in the last few years,
the number of applications of the theory and the methods of difference
algebras has increased fastly. For instance, it turned out that
difference \Gr\ bases may provide a very useful algorithmic tool
for the reduction of multiloop Feynman integrals in high energy physics
\cite{Ge04}, for automatic generation of finite difference approximations
to partial differential equations \cite{GBM,LM} and for the consistency
analysis of these approximations \cite{Ge12,GR10}. Relevant research
has been developed also in the context of linear functional systems
\cite{LW, Wu, ZW}. In addition to these natural applications, another
source of interest for difference algebras consists in the notion
of ``letterplace correspondence'' \cite{LSL,LSL2,LS2} which transforms
non-commutative computations for presented groups and algebras into analogue
computations with ordinary difference polynomials.
As a result of all this use, a number of computer algebra packages
implementing involutive and Buchberger's algorithms for computing
difference \Gr\ bases has been developed (see \cite{Ge12,GR12,LS} and reference
therein). A major drawback in these computations, as for the differential case,
is that such bases may be infinite owing to non-Noetherianity of the algebra
of difference polynomials. In fact, if $X$ is a finite set and $\Sigma$
denotes a multiplicative monoid isomorphic to $(\N^r,+)$ then the algebra
of difference polynomials is by definition the polynomial algebra $P$
in the infinite set of variables $X\times\Sigma$. Then, to provide
the termination of the procedures computing \Gr\ bases in $P$ at least in
some significant cases, we propose in this paper essentially two solutions.
One consists in defining an appropriate grading for $P$ that allows finite
truncated computations for difference ideals $J\subset P$ generated by
a finite number of homogeneous elements. For monomial orderings of $P$
that are compatible with such a grading this implies a criterion,
valid also for the non-graded case, which is able to certify the completeness
of a finite \Gr\ basis computed on a finite number of variables of $P$.
After the algebra of partial difference polynomials and its \Gr\ bases are
introduced in Section 2 and 3, this approach is described in Section 4
and an illustrative example based on the approximation of the Navier-Stokes
equations is given in Section 5. A second solution to the termination problem
consists in requiring that the difference ideal $J$ contains elements
with suitable linear leading monomials which corresponds to have
the Noetherian property for the quotient algebra $P/J$. Some similar
ideas appeared for the differential case in \cite{CF,Zo}. One finds this second
approach in Section 6. It is interesting to note that a relevant class
of such Noetherian quotient algebras is given by polynomial algebras $P'$
in a finite number of variables which are under the action of a tensor
product of a finite number of finite dimensional algebras generated by
single elements. These finite dimensional commutative algebras include
for instance group algebras of finite abelian groups and hence,
as a by-product of the theory of difference \Gr\ bases, one obtains
a theory for \Gr\ bases of ideals of $P'$ that are invariant under
the action of such groups or algebras (see also \cite{KLMP,St}).
These ideas are presented in Section 7 and a simple application
is described in Section 8. Finally, in Section 9 one finds conclusions
and hints for further developments of this research.

%%%%%%%%%%%%%%%%%%%%%%%%%%%%%%%%%%%%%%%%%%%%%%%%%%%%%%%%%%%%%%%%%%%%%%

\section{Algebras of difference polynomials}

In this section we introduce the algebras of partial difference polynomials
as freely generated objects in a suitable category of commutative algebras
that are invariant under the action of a monoid isomorphic to $\N^r$
(the monoid of partial shift operators). This is a natural viewpoint
since in the formal theory of partial difference equations the unknown
functions and their shifts are assumed to be algebraically independent.
Note that one has a similar situation with the theory of algebraic equations
where the algebras of polynomials are free objects in the category
of commutative algebras.

Let $\Sigma = \langle \sigma_1,\ldots,\sigma_r\rangle$ be a free commutative
monoid which is finitely generated by the elements $\sigma_i$.
We denote $\Sigma$ in the multiplicative way with 1 as the identity element.
One has clearly that $(\Sigma,\cdot)$ is isomorphic to the additive monoid
$(\N^r,+)$ by the mapping $\sigma_1^{\alpha_1}\cdots\sigma_r^{\alpha_r}\mapsto
(\alpha_1,\ldots,\alpha_r)$. Let $K$ be a field and denote by $\End(K)$
the monoid of ring endomomorphisms of $K$. We say that {\em $\Sigma$ acts on $K$}
or equivalently that $K$ is a {\em $\Sigma$-field} if there exists a monoid
homomorphism $\rho:\Sigma\to\End(K)$. In this case, for all $\sigma\in\Sigma$
and $c\in K$ we denote $\sigma\cdot c = \rho(\sigma)(c)$. Starting from now,
we always assume that $K$ is a $\Sigma$-field. We say that $K$ is a
{\em field of constants} if $\Sigma$ acts trivially on $K$, that is,
$\sigma\cdot c = c$, for any $\sigma\in\Sigma$ and $c\in K$.

Let $A$ be a commutative $K$-algebra. We say that $A$ is a {\em $\Sigma$-algebra}
if there is a monoid homomorphism $\rho':\Sigma\to\End(A)$ extending
$\rho:\Sigma\to\End(K)$, that is, $\rho'(\sigma)(c) =
\rho(\sigma)(c)$, for all $\sigma\in\Sigma$ and $c\in K$. To simplify notations,
for any $\sigma\in\Sigma$ and $a\in A$ we put $\sigma\cdot a = \rho'(\sigma)(a)$. 
Let $B$ be a $K$-subalgebra of a $\Sigma$-algebra $A$. We call $B$
a {\em $\Sigma$-subalgebra} of $A$ if $\Sigma\cdot B = \{\sigma\cdot b\mid
\sigma\in\Sigma,b\in B\}\subset B$. In the same way, if $I$ is an ideal
of $A$ such that $\Sigma\cdot I\subset I$ then we call $I$ a {\em $\Sigma$-ideal}
of $A$. Let $B$ be a $K$-subalgebra of $A$ and let $X\subset B$ be a subset.
If $B$ is the subalgebra generated by $\Sigma\cdot X$ then $B$ coincides
clearly with the smallest $\Sigma$-subalgebra of $A$ containing $X$. In this
case, we say that $B$ is the $\Sigma$-subalgebra which is {\em $\Sigma$-generated
by $X$} and we denote it as $K[X]_\Sigma$. In a similar way,
if $X\subset I\subset A$ is the ideal generated by $\Sigma\cdot X$ then one has
that $I$ is the smallest $\Sigma$-ideal of $A$ containing $X$. Then, we say
that $I$ is the $\Sigma$-ideal which is {\em $\Sigma$-generated} by $X$ and
we make use of notation $I = \langle X \rangle_\Sigma$. We also say that $X$
is a {\em $\Sigma$-basis} of $I$. 

Let $A,B$ be $\Sigma$-algebras and let $\varphi:A\to B$ be a $K$-algebra
homomorphism. We call $\varphi$ a {\em $\Sigma$-homomorphism}
if $\varphi(\sigma\cdot a) = \sigma\cdot\varphi(a)$, for all $\sigma\in\Sigma$
and $a\in A$. In the category of $\Sigma$-algebras one can define free objects
as follows. Let $X$ be a set and denote $x(\sigma)$ each element $(x,\sigma)$
of the product set $X(\Sigma) = X\times\Sigma$. Define $P = K[X(\Sigma)]$
the $K$-algebra of polynomials in the commuting variables $x(\sigma)\in
X(\Sigma)$. For any element $\sigma\in\Sigma$, consider the ring endomorphism
$\bar{\sigma}:P\to P$ such that
\[
c x(\tau)\mapsto (\sigma\cdot c) x(\sigma\tau)
\]
for all $c\in K$ and $x(\tau)\in X(\Sigma)$. Clearly, we have a monoid
homomorphism $\rho:\Sigma\to\End(P)$ such that $\rho(\sigma) = \bar{\sigma}$,
for any $\sigma\in\Sigma$. By definition of $\bar{\sigma}$, one has that
$\rho$ extends to $P$ the action of $\Sigma$ on the base field $K$, that is,
$P$ is a $\Sigma$-algebra. Note that the homomorphism $\rho$ is in fact an
injective map. The following result states that $P$ is a free object in the
category of $\Sigma$-algebras.

\begin{proposition}
\label{freeobj}
Let $A$ be a $\Sigma$-algebra and let $f:X\to A$ be any map. Then, there exists
a unique $\Sigma$-algebra homomorphism $\varphi:P\to A$ such that
$\varphi(x(1)) = f(x)$, for all $x\in X$.
\end{proposition}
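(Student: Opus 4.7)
The plan is to exploit the universal property of $P = K[X(\Sigma)]$ as a free commutative $K$-algebra on the set $X(\Sigma)$, and then to verify $\Sigma$-equivariance of the resulting map on generators. Uniqueness will drop out because $P$ is in fact $\Sigma$-generated by the elements $x(1)$, $x \in X$.

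First I would observe that the candidate $\varphi$ is already forced on every variable: since $\bar\sigma(x(1)) = x(\sigma)$, the required compatibility $\varphi(\sigma\cdot p) = \sigma\cdot\varphi(p)$ gives
\[ \varphi(x(\sigma)) \;=\; \varphi(\sigma\cdot x(1)) \;=\; \sigma\cdot\varphi(x(1)) \;=\; \sigma\cdot f(x). \]
I therefore define $\varphi : P \to A$ to be the unique $K$-algebra homomorphism determined by the assignment $x(\sigma) \mapsto \sigma\cdot f(x)$ for all $x \in X$ and $\sigma \in \Sigma$; such a $\varphi$ exists because $P$ is the free commutative $K$-algebra on the set $X(\Sigma)$.

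It then remains to check that $\varphi(\sigma\cdot p) = \sigma\cdot\varphi(p)$ for every $\sigma \in \Sigma$ and $p \in P$. Both sides are additive in $p$ and both are multiplicative (since $\bar\sigma$ and $\rho'(\sigma)$ are ring endomorphisms and $\varphi$ is a ring homomorphism), so it suffices to test the identity on the $K$-algebra generators of $P$, namely on scalars $c \in K$ and on variables $x(\tau)$. For a scalar, $\varphi(\sigma\cdot c) = (\sigma\cdot c)\cdot 1_A$ equals $\sigma\cdot(c\cdot 1_A) = \sigma\cdot\varphi(c)$, because by hypothesis the $\Sigma$-action on $A$ restricts on $K$ to the given action. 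For a variable, using that $\rho$ is a monoid homomorphism,
\[ \varphi(\sigma\cdot x(\tau)) \;=\; \varphi(x(\sigma\tau)) \;=\; (\sigma\tau)\cdot f(x) \;=\; \sigma\cdot(\tau\cdot f(x)) \;=\; \sigma\cdot\varphi(x(\tau)). \]

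Uniqueness then follows directly from the opening observation: any $\Sigma$-homomorphism $\psi : P \to A$ with $\psi(x(1)) = f(x)$ must agree with $\varphi$ on every $x(\sigma) = \sigma\cdot x(1)$, hence on all of $P$ since the $x(\sigma)$ generate $P$ as a $K$-algebra. I do not anticipate any real obstacle here; the only mildly delicate point is the compatibility between the scalar actions on $K$ and on $A$, which is precisely what the definition of a $\Sigma$-algebra supplies.
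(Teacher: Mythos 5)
Your proof is correct and follows essentially the same route as the paper: define $\varphi$ on variables by $x(\sigma)\mapsto\sigma\cdot f(x)$ via freeness of the polynomial algebra, verify $\Sigma$-equivariance on ring generators (the paper does this on terms $c\,x(\tau)$, you split into scalars and variables, which amounts to the same check), and deduce uniqueness from $x(\sigma)=\sigma\cdot x(1)$. No gaps.
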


\begin{proof}
A $K$-algebra homomorphism $\varphi:P\to A$ is clearly defined by putting
$\varphi(x(\sigma)) = \sigma\cdot f(x)$, for any $x\in X$ and $\sigma\in\Sigma$.
Then, one has that $\varphi(\sigma\cdot c x(\tau)) =
\varphi((\sigma\cdot c) x(\sigma\tau)) = (\sigma\cdot c)\varphi(x(\sigma\tau)) =
(\sigma\cdot c)(\sigma\tau\cdot f(x)) = \sigma\cdot(c (\tau\cdot f(x))) =
\sigma\cdot(c \varphi(x(\tau))) = \sigma\cdot\varphi(c x(\tau))$,
for all $c\in K$, $x\in X$ and $\sigma,\tau\in\Sigma$. In other words,
the mapping $\varphi:P\to A$ is a $\Sigma$-algebra homomorphism and owing to
$x(\sigma) = \sigma\cdot x(1)$, it is clearly the unique one such that
$\varphi(x(1)) = f(x)$, for all $x\in X$.
\end{proof}

\begin{definition}
We call $P = K[X(\Sigma)]$ the {\em free $\Sigma$-algebra generated
by $X$}. In fact, $P$ is $\Sigma$-generated by the subset $X(1) =
\{x(1)\mid x\in X\}$.
\end{definition}

Note that if $A$ is any $\Sigma$-algebra which is $\Sigma$-generated by $X$
one has that $A$ is isomorphic to the quotient $P/J$ where $J\subset P$ is the
$\Sigma$-ideal containing all $\Sigma$-algebra relations satisfied by
the elements of $X$. In other words, there is a surjective $\Sigma$-algebra
homomorphism $\varphi:P\to A$ such that $x(1)\mapsto x$ ($x\in X$) and one
defines $J = \Ker\varphi$.

We are ready now to make the link with the formal theory of partial
difference equations. Let $K$ be a field of functions in the variables
$t_1,\ldots,t_r$ and fix $h_1,\ldots,h_r$ some parameters (mesh steps).
Assume we may define the action of $\Sigma$ on $K$ by putting
for all $\sigma = \prod_i \sigma_i^{\alpha_i}\in\Sigma$ and for any function
$f\in K$
\[
\sigma\cdot f(t_1,\ldots,t_r) = f(t_1 + \alpha_1 h_1,\ldots,
t_r + \alpha_r h_r)\in K.
\]
For instance, one can consider the field of rational functions
$K = F(t_1,\ldots,t_k)$ over some field $F$ and $h_1,\ldots,h_r\in F$.
Consider now a finite set of unknown functions $u_i = u_i(t_1,\ldots,t_r)$
($1\leq i\leq n$) that are assumed to be $K$-algebraically independent
together with the shifted functions $\sigma\cdot u_i = u_i(t_1 + \alpha_1 h_1,
\ldots,t_r + \alpha_r h_r)$, for any $\sigma = \prod_i \sigma_i^{\alpha_i}
\in\Sigma$. If $X = \{x_1,\ldots,x_n\}$ and if we denote $x_i(\sigma) =
\sigma\cdot u_i$ then the free $\Sigma$-algebra $P = K[X(\Sigma)]$ is
by definition the {\em algebra of partial difference polynomials}.
In particular, if $K$ is a field of constants then the difference
polynomials of $P$ are said to be {\em with constant coefficients}. Moreover,
one uses the term {\em ordinary difference} when $r = 1$. Note that
in the literature one finds the notation $P = K\{X\}$ that emphasizes
the role of $X$ as (free) $\Sigma$-generating set of the algebra $P$.
According to the notations we have introduced for the $\Sigma$-algebras
one may write also $P = K[X]_\Sigma$. In fact, we prefer $P = K[X(\Sigma)]$
to mean that $P$ is the usual polynomial algebra defined for
some special set of variables $X(\Sigma)$ which is invariant under the
action of the monoid $\Sigma$, that is, $\Sigma\cdot X(\Sigma)\subset X(\Sigma)$.
In the theory of algebraic equations we have that systems of algebraic
equations correspond to bases of ideals of the polynomial algebra.
In a similar way, one has that systems of partial difference equations
corresponds to $\Sigma$-bases of $\Sigma$-ideals of $P$ which are also
called {\em partial difference ideals}. Note that $\Sigma$ and therefore
$X(\Sigma)$ is an infinite set which implies that $P$ is not a Noetherian
algebra. Then, one has that the $\Sigma$-ideals have bases and even
$\Sigma$-bases which are generally infinite.

%%%%%%%%%%%%%%%%%%%%%%%%%%%%%%%%%%%%%%%%%%%%%%%%%%%%%%%%%%%%%%%%%%%%%%

\section{\Gr\ bases of difference ideals}

In this section we introduce a \Gr\ basis theory for the algebra of partial
difference polynomials by extending what has be done in \cite{LS}
for the case of constant coefficients. Note that the concept of difference
\Gr\ basis has arisen also in \cite{Ge12,GR12,LSL2}.

\begin{definition}
\label{monord}
Let $\prec$ be a total ordering on the set $M = \Mon(P)$ of all monomials 
of $P$. We call $\prec$ a {\em monomial ordering of $P$} if the following
properties are satisfied:
\begin{itemize}
\item[(i)] $\prec$ is a multiplicatively compatible ordering, that is,
if $m\prec n$ then $t m \prec t n$, for any $m,n,t\in M$;
\item[(ii)] $\prec$ is a well-ordering, that is, every non-empty subset
of $M$ has a minimal element.
\end{itemize}
It is clear that in this case one has also that
\begin{itemize}
\item[(iii)] $1\prec m$, for all $m\in M, m\neq 1$.
\end{itemize}
\end{definition}

Even if the variables set $X(\Sigma)$ is infinite, by Higman's Lemma
\cite{Hi} the polynomial algebra $P = K[X(\Sigma)]$ can be always endowed
with a monomial ordering. 

\begin{proposition}
Let $\prec$ be a total ordering on $M$ which verifies the properties
$(i),(iii)$ of Definition \ref{monord}. If $\prec$ induces a well-ordering
on the variables set $X(\Sigma)\subset M$, then $\prec$ is a well-ordering
also on $M$ and hence it is a monomial ordering of $P$.
\end{proposition}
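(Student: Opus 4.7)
The plan is to reduce the well-ordering property on $M$ to Higman's Lemma applied to the hypothesis that $\prec$ induces a well-ordering on $X(\Sigma)$. The main idea is to show that every infinite sequence of monomials in $M$ contains a divisibility relation between two of its terms, which, combined with properties $(i)$ and $(iii)$, forbids any infinite strictly descending $\prec$-chain in $M$.

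First I would establish the compatibility between divisibility and $\prec$: if $m,n\in M$ satisfy $m\mid n$ and $m\neq n$, then writing $n = tm$ with $t\in M\setminus\{1\}$, property $(iii)$ gives $1\prec t$ and property $(i)$ then yields $m = 1\cdot m\prec t\cdot m = n$. Hence proper divisibility forces strict $\prec$-smallerness.

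Next I would invoke Higman's Lemma. Since $\prec$ restricts to a well-ordering on $X(\Sigma)$, the variable set is in particular well-quasi-ordered (one can simply take equality as the underlying quasi-order). Each monomial $m\in M$ can be encoded as a finite word on $X(\Sigma)$ by listing, with multiplicity, the variables of $m$ in $\prec$-increasing order. Under this encoding, divisibility $m\mid n$ in $M$ corresponds exactly to the subsequence embedding of the word of $m$ into the word of $n$. Higman's Lemma asserts that the set of finite words over a well-quasi-ordered alphabet, equipped with the embedding relation, is again a well-quasi-order. Consequently, for every infinite sequence $m_1,m_2,\ldots$ in $M$ there are indices $i<j$ with $m_i\mid m_j$.

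Finally, suppose towards a contradiction that $\prec$ is not a well-ordering on $M$; since $\prec$ is total, there exists an infinite strictly descending chain $m_1\succ m_2\succ\cdots$. Applying the previous step to this chain produces $i<j$ with $m_i\mid m_j$; as the $m_k$ are pairwise distinct, the divisibility is proper, and the first step gives $m_i\prec m_j$, contradicting $m_i\succ m_j$. Hence $\prec$ is a well-ordering on $M$, and together with $(i)$ it is a monomial ordering in the sense of Definition \ref{monord}. The only mildly delicate point is the bookkeeping between divisibility of commutative monomials in $M$ and subsequence embedding of words in $X(\Sigma)^*$; Higman's Lemma itself carries out all the combinatorial work.
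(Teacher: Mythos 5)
Your reduction to Higman's Lemma is the right instinct (and it is the route the paper implicitly suggests, citing Higman just before the statement), but the combinatorial claim you extract from it is false, and the error is located exactly in the parenthetical ``one can simply take equality as the underlying quasi-order.'' Equality on an \emph{infinite} alphabet is not a well-quasi-order, and $X(\Sigma)=X\times\Sigma$ is infinite; correspondingly, the divisibility order on $M=\Mon(K[X(\Sigma)])$ is not a well-quasi-order. Concretely, the single variables $x(1), x(\sigma_1), x(\sigma_1^2),\ldots$ form an infinite antichain for divisibility, so your assertion that every infinite sequence $m_1,m_2,\ldots$ in $M$ admits $i<j$ with $m_i\mid m_j$ fails (this is Dickson's Lemma, which genuinely requires finitely many variables). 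A symptom of the problem is that your argument never actually uses the hypothesis that $\prec$ well-orders $X(\Sigma)$ — yet that hypothesis is essential, since a lexicographic order built from a descending $\omega^{\mathrm{op}}$-chain of variables satisfies $(i)$ and $(iii)$ but is not a well-ordering of $M$.

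The proof is repairable along the same lines, and this is where the hypothesis enters. Apply Higman's Lemma to the alphabet $X(\Sigma)$ equipped with the quasi-order $\preceq$ itself: a totally ordered set is a well-quasi-order precisely when it is a well-order, so the hypothesis makes $(X(\Sigma),\preceq)$ a wqo. Higman then says that the induced embedding order $\trianglelefteq$ on monomials is a wqo, where $m=y_1\cdots y_k\trianglelefteq n$ means that $n$ contains distinct variable occurrences $z_1,\ldots,z_k$ with $y_i\preceq z_i$ for all $i$. This relation is coarser than divisibility, but it still implies $m\preceq n$: repeated use of $(i)$ gives $y_1\cdots y_k\preceq z_1\cdots z_k$, and $(iii)$ together with $(i)$ gives $z_1\cdots z_k\preceq n$ (your first step is the special case where all $y_i=z_i$). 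Now an infinite strictly descending chain $m_1\succ m_2\succ\cdots$ yields $i<j$ with $m_i\trianglelefteq m_j$, hence $m_i\preceq m_j$, a contradiction. With that substitution your argument is complete.
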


Note now that the monomials set $M$ is invariant under the action
of $\Sigma$, that is $\Sigma\cdot M\subset M$, because the same happens
to the variables set $X(\Sigma)$. Clearly, we have to require that a monomial
ordering respects this key property for defining \Gr\ bases of
$\Sigma$-ideals of $P$ which are ideals that are $\Sigma$-invariant.
In other words, one has to introduce the following notion.

\begin{definition}
Let $\prec$ be a monomial ordering of $P$. We call $\prec$ a {\em monomial
$\Sigma$-ordering of $P$} if $m\prec n$ implies that
$\sigma\cdot m\prec \sigma\cdot n$, for all $m,n\in M$ and $\sigma\in\Sigma$.
\end{definition}

Note that if $\prec$ is a monomial $\Sigma$-ordering of $P$ then one has
immediately that $\sigma\cdot m\succeq m$, for all $m\in M$ and
$\sigma\in\Sigma$. Examples of such orderings can be easily constructed
in the following way.
Let $Q = K[\sigma_1,\ldots,\sigma_r]$ be the polynomial algebra
in the variables $\sigma_j$ and therefore $\Sigma = \Mon(Q)$.
Moreover, let $K[X] = K[x_1,\ldots,x_n]$ be the polynomial algebra
in the variables $x_i$. Fix a monomial ordering $<$ for $Q$ and a monomial
ordering $\prec$ for $K[X]$. For any $\sigma\in\Sigma$, denote $X(\sigma) =
\{x_i(\sigma)\mid x_i\in X\}$. Clearly $P(\sigma) = K[X(\sigma)]$ is
a subalgebra of $P$ which is isomorphic to $K[X]$ and hence it can be
endowed with the monomial ordering $\prec$. Since $X(\Sigma) =
\bigcup_{\sigma\in\Sigma} X(\sigma)$, one can define a block monomial
ordering for $P = K[X(\Sigma)]$ obtained by $<$ and $\prec$.

\begin{proposition}
\label{weightord}
Let $m,n\in M$ be any pair of monomials. Clearly, we can factorize
these monomials as $m = m_1\cdots m_k, n = n_1\cdots n_k$ where
$m_i,$ $n_i\in M(\delta_i) = \Mon(P(\delta_i))$ $(\delta_i\in\Sigma)$
and $\delta_1 > \ldots > \delta_k$ $(k\geq 1)$. Note explicitely that
some of the factors $m_i,n_i$ may be eventually equal to 1. We define
$m\prec' n$ if and only if there is $1\leq i\leq k$ such that $m_j = n_j$ when
$j < i$ and $m_i\prec n_i$. Then, $\prec'$ is a monomial $\Sigma$-ordering
of $P$.
\end{proposition}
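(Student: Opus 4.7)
My plan is to verify, in order, that $\prec'$ is (a) a total ordering, (b) multiplicatively compatible, (c) satisfies $1\prec' m$ for all $m\neq 1$, (d) well-orders the variables set $X(\Sigma)$, and finally (e) is $\Sigma$-compatible. Properties (a)--(d), together with the preceding proposition, will ensure that $\prec'$ is a monomial ordering of $P$, and (e) will upgrade it to a monomial $\Sigma$-ordering.

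First I would observe that for any two monomials $m,n\in M$ only finitely many shifts $\delta\in\Sigma$ occur in their variables, so by taking the union of these shifts and listing them as $\delta_1>\cdots>\delta_k$, one obtains simultaneous block factorizations $m=m_1\cdots m_k$, $n=n_1\cdots n_k$ with $m_i,n_i\in M(\delta_i)$, some possibly equal to $1$. The relation $\prec'$ is then well-defined and trichotomous on $M$, giving (a). For (b), factor $t=t_1\cdots t_k$ compatibly with the same chain; then $tm$ and $tn$ have blocks $t_im_i$ and $t_in_i$, and a block-by-block comparison using multiplicative compatibility of $\prec$ on each $P(\delta_i)\cong K[X]$ yields $tm\prec'tn$. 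Property (c) is immediate, since any $m\neq 1$ has at least one nontrivial block, which is $\prec$-larger than $1$.

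For (d), each variable $x_i(\sigma)$ has the trivial single-block factorization at position $\delta_1=\sigma$. Comparing two variables $x_i(\sigma)$ and $x_j(\tau)$ block by block, one sees that $\prec'$ restricts on $X(\Sigma)$ to the lexicographic product of $<$ on $\Sigma$ and $\prec$ on $X$; since $<$ is a well-ordering of $\Sigma$ and $X$ is finite, this lex order is itself a well-ordering. The preceding proposition then extends the well-ordering from $X(\Sigma)$ to all of $M$.

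The main conceptual point is (e). The shift $\bar{\sigma}:P\to P$ maps each block subalgebra $P(\delta_i)$ isomorphically onto $P(\sigma\delta_i)$ by relabeling $x_j(\delta_i)\mapsto x_j(\sigma\delta_i)$, and this identification commutes with the fixed isomorphism $P(\tau)\cong K[X]$ used to compare blocks. Because $<$ is multiplicatively compatible on $\Sigma$, the chain $\sigma\delta_1>\cdots>\sigma\delta_k$ is the block chain for both $\sigma\cdot m$ and $\sigma\cdot n$. The blocks agree at positions $j<i$, and translating $m_i\prec n_i$ across the isomorphism $P(\delta_i)\cong K[X]\cong P(\sigma\delta_i)$ gives $\sigma\cdot m_i\prec\sigma\cdot n_i$, hence $\sigma\cdot m\prec'\sigma\cdot n$. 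I expect this last step to be the main obstacle only insofar as one must justify carefully that the comparison within a block is intrinsic to the abstract polynomial algebra $K[X]$ and hence preserved by the shift isomorphism; once this is spelled out, $\Sigma$-compatibility follows routinely.
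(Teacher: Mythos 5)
Your proof is correct and, for the one genuinely new point — the $\Sigma$-compatibility of $\prec'$ — it follows exactly the paper's argument: the shift relabels each block $M(\delta_i)$ onto $M(\sigma\delta_i)$, preserves the chain order because $<$ is multiplicatively compatible on $Q$, and preserves the within-block comparison via the identification of each $P(\delta)$ with $K[X]$. Your additional verifications (a)--(d) of the monomial-ordering axioms are sound but are standard block-ordering facts that the paper simply takes for granted from the preceding discussion.
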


\begin{proof}
For all $\sigma\in\Sigma$, one has that $\sigma\cdot m =
m'_1\cdots m'_k$ where $m'_i = \sigma\cdot m_i\in M(\sigma\delta_i)$
and $\sigma\delta_1 > \ldots > \sigma\delta_k$ because $<$ is a monomial
ordering of $Q$. Assume $m\prec' n$, that is, $m_j = n_j$ for $j < i$
and $m_i\prec n_i$. Clearly, one has also that $m'_j = n'_j$.
Moreover, by definition of the monomial ordering $\prec$ on all
subalgebras $P(\sigma)\subset P$ we have that $m_i\prec n_i$ if and only if
$m'_i\prec n'_i$. We conclude that $\sigma\cdot m\prec' \sigma\cdot n$.
\end{proof}

\begin{Example}
\label{monordex}
Fix $n = 2$ and $r = 3$, that is, let $X = \{x,y\}$ and $\Sigma =
\langle \sigma_1,\sigma_2,\sigma_3 \rangle$. To simplify the notation
of the variables in $X(\Sigma)$, we identify $\Sigma$ with the additive
monoid $\N^3$, that is, we put $X(\Sigma) = \{x(i,j,k),y(i,j,k)\mid i,j,k\geq 0\}$.
By Proposition \ref{weightord}, a monomial $\Sigma$-ordering is defined
for $P = K[X(\Sigma)]$ once two monomial orderings are given
for $Q = K[\sigma_1,\sigma_2,\sigma_3]$ and $K[X] = K[x,y]$.
Consider for instance the degree reverse lexicographic
ordering $<$ on $Q$ ($\sigma_1 > \sigma_2 > \sigma_3$) and the
lexicographic ordering $\prec$ on $K[X]$ ($x\succ y$).
One has that $<$ orders the blocks of variables $X(i,j,k) =
\{x(i,j,k),y(i,j,k)\}$ in the following way
\[
\begin{array}{l}
\ldots >
\{x(2, 0, 0), y(2, 0, 0)\} > \{x(1, 1, 0), y(1, 1, 0)\} >
\{x(0, 2, 0), y(0, 2, 0)\} > \\
\hphantom{\ldots >\ }
\{x(1, 0, 1), y(1, 0, 1)\} > \{x(0, 1, 1), y(0, 1, 1)\} >
\{x(0, 0, 2), y(0, 0, 2)\} > \\
\hphantom{\ldots >\ }
\{x(1, 0, 0), y(1, 0, 0)\} > \{x(0, 1, 0), y(0, 1, 0)\} >
\{x(0, 0, 1), y(0, 0, 1)\} > \\
\hphantom{\ldots >\ }
\{x(0, 0, 0), y(0, 0, 0)\}.
\end{array}
\]
Moreover, the ordering $\prec$ is defined for each subalgebra
$K[x(i,j,k),y(i,j,k)]$. The resulting block monomial ordering for $P$
(which is a  $\Sigma$-ordering by Proposition \ref{weightord}) is therefore
the lexicographic ordering with
\[
\begin{array}{l}
\ldots \succ
x(2, 0, 0)\succ y(2, 0, 0)\succ x(1, 1, 0)\succ y(1, 1, 0)\succ
x(0, 2, 0)\succ y(0, 2, 0)\succ \\
\hphantom{\ldots \succ\ }
x(1, 0, 1)\succ y(1, 0, 1)\succ x(0, 1, 1)\succ y(0, 1, 1)\succ
x(0, 0, 2)\succ y(0, 0, 2)\succ \\
\hphantom{\ldots \succ\ }
x(1, 0, 0)\succ y(1, 0, 0)\succ x(0, 1, 0)\succ y(0, 1, 0)\succ
x(0, 0, 1)\succ y(0, 0, 1)\succ \\
\hphantom{\ldots \succ\ }
x(0, 0, 0)\succ y(0, 0, 0).
\end{array}
\]
\end{Example}

From now on, we assume that $P$ is endowed with a monomial $\Sigma$-ordering
$\prec$. Let $f = \sum_i c_i m_i\in P$ with $m_i\in M$ and $0\neq c_i\in K$.
If $m_k = \max_\prec\{m_i\}$ then we denote as usual $\lm(f) = m_k, \lc(f) = c_k$
and $\lt(f) = c_k m_k$. Since $\prec$ is a $\Sigma$-ordering, one has that
$\lm(\sigma\cdot f) = \sigma\cdot\lm(f)$ and therefore $\lc(\sigma\cdot f) =
\sigma\cdot\lc(f),\lt(\sigma\cdot f) = \sigma\cdot\lt(f)$, for all
$\sigma\in\Sigma$. If $G\subset P$ then we denote $\langle G \rangle =
\{\sum_i f_i g_i\mid f_i\in P, g_i\in G\}$, that is, $\langle G \rangle$
is the ideal of $P$ generated by $G$. Moreover, recall that
$\langle G \rangle_\Sigma = \langle \Sigma\cdot G \rangle =
\{\sum_i f_i (\delta_i\cdot g_i)\mid \delta_i\in\Sigma, f_i\in P, g_i\in G\}$
is the $\Sigma$-ideal which is $\Sigma$-generated by $G$, that is, it is
the smallest $\Sigma$-ideal of $P$ containing $G$. We call $G$ a $\Sigma$-basis
of $\langle G \rangle_\Sigma$. Finally, we put $\lm(G) =
\{\lm(f) \mid f\in G,f\neq 0\}$ and we denote $\LM(G) = \langle \lm(G) \rangle$.

\begin{proposition}
Let $G\subset P$. Then $\lm(\Sigma\cdot G) = \Sigma\cdot \lm(G)$.
In particular, if $I$ is a $\Sigma$-ideal of $P$ then $\LM(I)$
is also a $\Sigma$-ideal.
\end{proposition}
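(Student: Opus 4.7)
The plan is to deduce both assertions from the single identity already noted in the excerpt: because $\prec$ is a monomial $\Sigma$-ordering, for every nonzero $f\in P$ and every $\sigma\in\Sigma$ one has $\lm(\sigma\cdot f)=\sigma\cdot\lm(f)$. I will also use that $\sigma\cdot f\neq 0$ whenever $f\neq 0$: the leading coefficient becomes $\sigma\cdot\lc(f)$, and since $\rho(\sigma)$ is a ring endomorphism of a field, it is injective, so $\sigma\cdot\lc(f)\neq 0$.

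For the equality $\lm(\Sigma\cdot G)=\Sigma\cdot\lm(G)$, I simply unfold both sides. By definition $\lm(\Sigma\cdot G)=\{\lm(\sigma\cdot g)\mid \sigma\in\Sigma,\ g\in G,\ \sigma\cdot g\neq 0\}$. Using the observations above, the condition $\sigma\cdot g\neq 0$ is equivalent to $g\neq 0$, and $\lm(\sigma\cdot g)=\sigma\cdot\lm(g)$, so the set on the left becomes $\{\sigma\cdot\lm(g)\mid\sigma\in\Sigma,\ g\in G,\ g\neq 0\}=\Sigma\cdot\lm(G)$.

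For the second assertion, apply the first part with $G=I$. Since $I$ is a $\Sigma$-ideal, $\Sigma\cdot I\subset I$, hence $\lm(\Sigma\cdot I)\subset\lm(I)$, and combining with the identity just proved gives $\Sigma\cdot\lm(I)\subset\lm(I)$. It remains to check that this implies $\Sigma\cdot\LM(I)\subset\LM(I)$. Any element of $\LM(I)=\langle\lm(I)\rangle$ can be written as $\sum_i p_i m_i$ with $p_i\in P$ and $m_i\in\lm(I)$; then for any $\sigma\in\Sigma$,
\[
\sigma\cdot\Big(\sum_i p_i m_i\Big)=\sum_i (\sigma\cdot p_i)(\sigma\cdot m_i),
\]
and each $\sigma\cdot m_i$ lies in $\Sigma\cdot\lm(I)\subset\lm(I)\subset\LM(I)$, so the whole sum lies in $\LM(I)$. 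Thus $\LM(I)$ is a $\Sigma$-ideal.

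There is no real obstacle here; the only place one must be mildly careful is the non-vanishing of $\sigma\cdot g$, which relies on injectivity of $\rho(\sigma)$ on the base field $K$. Everything else is a direct substitution using the compatibility of $\lm$ with the $\Sigma$-action, a fact already recorded immediately after the definition of a monomial $\Sigma$-ordering.
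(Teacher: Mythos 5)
Your proof is correct and follows essentially the same route as the paper's: both rest on the identity $\lm(\sigma\cdot f)=\sigma\cdot\lm(f)$ guaranteed by the monomial $\Sigma$-ordering, deduce $\Sigma\cdot\lm(I)=\lm(\Sigma\cdot I)\subset\lm(I)$ from $\Sigma\cdot I\subset I$, and conclude that $\LM(I)$ is a $\Sigma$-ideal. You merely spell out two details the paper leaves implicit (the non-vanishing of $\sigma\cdot g$ and the passage from $\Sigma\cdot\lm(I)\subset\lm(I)$ to $\Sigma\cdot\LM(I)\subset\LM(I)$), which is fine.
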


\begin{proof}
Since $P$ is endowed with a $\Sigma$-ordering, one has that
$\lm(\sigma\cdot f) = \sigma\cdot\lm(f)$, for any $f\in P,f\neq 0$ and
$\sigma\in\Sigma$. Then, $\Sigma\cdot\lm(I) = \lm(\Sigma\cdot I)\subset
\lm(I)$ and therefore $\LM(I) = \langle \lm(I) \rangle$ is a 
$\Sigma$-ideal.
\end{proof}

\begin{definition}
Let $I\subset P$ be a $\Sigma$-ideal and $G\subset I$.
We call $G$ a {\em \Gr\ $\Sigma$-basis} of $I$ if $\lm(G)$ is a
$\Sigma$-basis of $\LM(I)$. In other words, $\lm(\Sigma\cdot G) =
\Sigma\cdot\lm(G)$ is a basis of $\LM(I)$, that is, $\Sigma\cdot G$
is a \Gr\ basis of $I$ as an ideal of $P$.
\end{definition}

Since $P$ is not a Noetherian algebra, in general its $\Sigma$-ideals have
infinite (\Gr) $\Sigma$-bases. Note that one has a similar situation
for the free associative algebra and its ideals and this case is strictly
related with the algebra of ordinary difference polynomials owing to the
notion of ``letterplace correspondence'' \cite{LSL,LSL2,LS2}.
See also the comprehensive Bergman's paper \cite{Be} where the theory of
Gr\"obner bases (he did not use this name) is provided for both commutative
and non-commutative algebras in full generality, that is, without
any assumption about Noetherianity.
In Section 6 we will prove in fact the existence of a class of $\Sigma$-ideals
containing finite \Gr\ $\Sigma$-bases. According to \cite{Ge12,GR12},
such finite bases are also called ``difference \Gr\ bases''.

Let now $f,g\in P,f,g\neq 0$ and put $\lt(f) = c m, \lt(g) = d n$
with $m,n\in M$ and $c,d\in K$. If $l = \lcm(m,n)$ one defines
the {\em S-polynomial} $\spoly(f,g) = (l/c m) f - (l/d n) g$.

\begin{proposition}
\label{sigmaspoly}
For all $f,g\in P,f,g\neq 0$ and for any $\sigma\in\Sigma$ one has that
$\sigma\cdot\spoly(f,g) = \spoly(\sigma\cdot f,\sigma\cdot g)$.
\end{proposition}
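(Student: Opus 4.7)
The plan is to compute both sides directly, relying only on the fact that $\sigma$ acts as a ring endomorphism of $P$ extending a field endomorphism of $K$, together with the leading-term compatibility noted just before the statement.

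Write $\lt(f) = cm$ and $\lt(g) = dn$ with $c,d\in K\setminus\{0\}$ and $m,n\in M$, and let $l = \lcm(m,n)$. The paragraph preceding the proposition gives $\lt(\sigma\cdot f) = (\sigma\cdot c)(\sigma\cdot m)$ and $\lt(\sigma\cdot g) = (\sigma\cdot d)(\sigma\cdot n)$, so the leading coefficients and monomials entering $\spoly(\sigma\cdot f,\sigma\cdot g)$ are precisely $\sigma\cdot c,\sigma\cdot d,\sigma\cdot m,\sigma\cdot n$. It remains to identify the corresponding lcm.

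The one combinatorial fact I need is
\[
\lcm(\sigma\cdot m,\,\sigma\cdot n) \;=\; \sigma\cdot\lcm(m,n).
\]
This follows from the defining formula $\bar\sigma(x(\tau)) = x(\sigma\tau)$: the map $x(\tau)\mapsto x(\sigma\tau)$ is an injection of $X(\Sigma)$ into itself, so it extends to an injective monoid homomorphism $M\to M$ that preserves (and reflects) divisibility. Hence $\sigma\cdot m$ and $\sigma\cdot n$ both divide $\sigma\cdot l$; and any common multiple of $\sigma\cdot m$ and $\sigma\cdot n$ can be assumed (up to completing with extra variables that only make it larger) to involve only variables in $\sigma\cdot X(\Sigma)$, hence pulls back to a common multiple of $m$ and $n$, which must therefore be divisible by $l$.

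With this identity in hand, I write
\[
\spoly(\sigma\cdot f,\sigma\cdot g) \;=\; \frac{\sigma\cdot l}{(\sigma\cdot c)(\sigma\cdot m)}\,(\sigma\cdot f) \;-\; \frac{\sigma\cdot l}{(\sigma\cdot d)(\sigma\cdot n)}\,(\sigma\cdot g).
\]
Because $\rho(\sigma)\colon P\to P$ is a ring endomorphism whose restriction to $K$ is a field endomorphism (hence injective, so $\sigma\cdot c,\sigma\cdot d\neq 0$ and $\sigma\cdot(1/c) = 1/(\sigma\cdot c)$), each of these coefficients equals $\sigma\cdot(l/(cm))$ and $\sigma\cdot(l/(dn))$ respectively. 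Using multiplicativity of $\rho(\sigma)$ one last time collapses the expression to $\sigma\cdot\bigl((l/(cm))f - (l/(dn))g\bigr) = \sigma\cdot\spoly(f,g)$.

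The only non-trivial point in the argument is the lcm identity; everything else is bookkeeping that uses that $\sigma$ preserves sums, products, and nonzero scalars. I do not expect any real obstacle.
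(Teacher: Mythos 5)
Your proposal is correct and follows essentially the same route as the paper: both reduce the claim to the identity $\lcm(\sigma\cdot m,\sigma\cdot n)=\sigma\cdot\lcm(m,n)$, justified by the fact that $\Sigma$ acts on the variable set $X(\Sigma)$ by injective maps, and then push $\sigma$ through the defining formula of the S-polynomial using that $\rho(\sigma)$ is a ring endomorphism extending a (necessarily injective) field endomorphism of $K$. The only difference is that you spell out the divisibility-pullback argument for the lcm identity in more detail than the paper, which simply asserts it.
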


\begin{proof}
Note that $\lt(\sigma\cdot f) = (\sigma\cdot c)(\sigma\cdot m),
\lt(\sigma\cdot g) = (\sigma\cdot d)(\sigma\cdot n)$ with
$\sigma\cdot m, \sigma\cdot n\in M$ and $\sigma\cdot c,\sigma\cdot d\in K$.
Since $\Sigma$ acts on the variables set $X(\Sigma)$ by injective maps,
if $l = \lcm(m,n)$ then $\sigma\cdot l = \lcm(\sigma\cdot m,\sigma\cdot n)$
and therefore we have
\begin{equation*}
\begin{gathered}
\sigma\cdot \spoly(f,g) = \sigma\cdot( \frac{l}{c m} f - \frac{l}{d n} g ) = \\
\frac{\sigma\cdot l}{(\sigma\cdot c)(\sigma\cdot m)}\sigma\cdot f -
\frac{\sigma\cdot l}{(\sigma\cdot d)(\sigma\cdot n)}\sigma\cdot g =
\spoly(\sigma\cdot f,\sigma\cdot g).
\end{gathered}
\end{equation*}
\end{proof}

In the theory of \Gr\ bases one has the following important notion.

\begin{definition}
Let $f\in P,f\neq 0$ and $G\subset P$. If $f = \sum_i f_i g_i$ with
$f_i\in P,g_i\in G$ and $\lm(f)\succeq\lm(f_i)\lm(g_i)$ for all $i$,
we say that {\em $f$ has a \Gr\ representation with respect to $G$}.
\end{definition}
 
Note that if $f = \sum_i f_i g_i$ is a \Gr\ representation then
$\sigma\cdot f = \sum_i (\sigma\cdot f_i)(\sigma\cdot g_i)$
is also a \Gr\ representation, for any $\sigma\in\Sigma$.
In fact, from $\lm(f)\succeq\lm(f_i)\lm(g_i)$ it follows that
$\lm(\sigma\cdot f) = \sigma\cdot \lm(f)\succeq
(\sigma\cdot \lm(f_i))(\sigma\cdot \lm(g_i)) = \lm(\sigma\cdot f_i)
\lm(\sigma\cdot g_i)$, for all indices $i$. Finally, if $\sigma =
\prod_i \sigma_i^{\alpha_i}, \tau = \prod_i \sigma_i^{\beta_i}
\in\Sigma = \langle \sigma_1,\ldots,\sigma_r\rangle$ we define
$\gcd(\sigma,\tau) = \prod_i \sigma_i^{\gamma_i}$ where
$\gamma_i = \min(\alpha_i,\beta_i)$. For the \Gr\ $\Sigma$-bases
of $P$ we have the following characterization.

\begin{proposition}[$\Sigma$-criterion]
\label{sigmacrit}
Let $G$ be a $\Sigma$-basis of a $\Sigma$-ideal $I\subset P$.
Then, $G$ is a \Gr\ $\Sigma$-basis of $I$ if and only if for all
$f,g\in G,f,g\neq 0$ and for any $\sigma,\tau\in\Sigma$ such that
$\gcd(\sigma,\tau) = 1$ and $\gcd(\sigma\cdot\lm(f),\tau\cdot\lm(g))\neq 1$,
the S-polynomial $\spoly(\sigma\cdot f, \tau\cdot g)$ has a \Gr\
representation with respect to $\Sigma\cdot G$.
\end{proposition}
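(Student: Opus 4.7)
The forward direction is essentially a definitional unwinding: if $G$ is a Gr\"obner $\Sigma$-basis, then by definition $\Sigma\cdot G$ is an ordinary Gr\"obner basis of $I$, so Buchberger's classical criterion yields a Gr\"obner representation (with respect to $\Sigma\cdot G$) for \emph{every} S-polynomial of pairs in $\Sigma\cdot G$, which certainly covers the restricted family indexed by $\sigma,\tau$ with $\gcd(\sigma,\tau)=1$.

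For the converse, the plan is to use the classical Buchberger criterion applied to $\Sigma\cdot G$ as a (possibly infinite) generating set of $I$ in $P$: it suffices to produce a Gr\"obner representation of $\spoly(p,q)$ with respect to $\Sigma\cdot G$ for every pair $p=\sigma\cdot f$, $q=\tau\cdot g$ with $f,g\in G$ nonzero and $\sigma,\tau\in\Sigma$. Given such a pair, set $\delta=\gcd(\sigma,\tau)$ and factor $\sigma=\delta\sigma'$, $\tau=\delta\tau'$ with $\gcd(\sigma',\tau')=1$. Proposition~\ref{sigmaspoly} gives
\[
\spoly(\sigma\cdot f,\tau\cdot g)=\spoly\bigl(\delta\cdot(\sigma'\cdot f),\delta\cdot(\tau'\cdot g)\bigr)=\delta\cdot\spoly(\sigma'\cdot f,\tau'\cdot g),
\]
so it is enough to obtain a Gr\"obner representation for $\spoly(\sigma'\cdot f,\tau'\cdot g)$ and then apply $\delta$: since $\prec$ is a $\Sigma$-ordering, the observation recorded before the statement shows that applying $\delta$ to a Gr\"obner representation with respect to $\Sigma\cdot G$ again yields a Gr\"obner representation with respect to $\delta\cdot\Sigma\cdot G\subset\Sigma\cdot G$.

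Thus I may assume $\gcd(\sigma,\tau)=1$ from the outset. Now split into two subcases. If $\gcd(\sigma\cdot\lm(f),\tau\cdot\lm(g))\neq 1$, the hypothesis of the proposition directly supplies the required Gr\"obner representation. Otherwise $\gcd(\sigma\cdot\lm(f),\tau\cdot\lm(g))=1$, and the classical coprime leading-monomial lemma of Buchberger produces an explicit Gr\"obner representation of the S-polynomial using only the two generators $\sigma\cdot f$ and $\tau\cdot g$, both of which lie in $\Sigma\cdot G$. Combining the two subcases covers all S-polynomials of $\Sigma\cdot G$, so Buchberger's criterion concludes that $\Sigma\cdot G$ is a Gr\"obner basis of $I$, i.e., $G$ is a Gr\"obner $\Sigma$-basis.

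The main obstacle I expect is bookkeeping rather than conceptual difficulty: one has to verify carefully that $\spoly$ really commutes with the $\Sigma$-action on factored pairs (done by Proposition~\ref{sigmaspoly} since $\gcd$ and $\lcm$ are preserved by the injective shift maps), and that the Gr\"obner representation property is stable under the action of $\Sigma$ (done by the monomial $\Sigma$-ordering property). Once these invariance statements are in hand, the reduction from an arbitrary pair $(\sigma,\tau)$ to a coprime pair, and then to the classical Buchberger criterion, is straightforward.
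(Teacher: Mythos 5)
Your proposal is correct and follows essentially the same route as the paper: reduce to Buchberger's criterion for the (infinite) generating set $\Sigma\cdot G$, factor out $\delta=\gcd(\sigma,\tau)$ using Proposition~\ref{sigmaspoly} together with the stability of Gr\"obner representations under the $\Sigma$-action, and dispose of the coprime-leading-monomial pairs by the product criterion. The only cosmetic difference is that you apply the two reductions in the opposite order (first normalizing $\gcd(\sigma,\tau)=1$, then invoking the product criterion), which changes nothing of substance.
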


\begin{proof}
Recall that $G$ is a \Gr\ $\Sigma$-basis if and only if $\Sigma\cdot G$
is a \Gr\ basis of $I$. By Buchberger's criterion \cite{Bu} or
by Bergman's diamond lemma \cite{Be} this happens if and only if
the S-polynomials $\spoly(\sigma\cdot f, \tau\cdot g)$ have a
\Gr\ representation with respect to $\Sigma\cdot G$, for all
$f,g\in G,f,g\neq 0$ and $\sigma,\tau\in\Sigma$. By the product criterion
(see for instance \cite{GP}) we may restrict ourselves to considering
only S-polynomials such that $\gcd(\sigma\cdot\lm(f),\tau\cdot\lm(g))\neq 1$
since $\lm(\sigma\cdot f) = \sigma\cdot\lm(f)$ and $\lm(\tau\cdot g) =
\tau\cdot\lm(g)$. Then, let $\spoly(\sigma\cdot f, \tau\cdot g)$ be any
such S-polynomial and put $\delta = \gcd(\sigma,\tau)$ and therefore
$\sigma = \delta \sigma', \tau = \delta \tau'$ with $\sigma',\tau'\in\Sigma,
\gcd(\sigma',\tau') = 1$. One has that $\spoly(\sigma\cdot f, \tau\cdot g) =
\delta\cdot \spoly(\sigma'\cdot f, \tau'\cdot g)$ owing to Proposition
\ref{sigmaspoly}. Note now that if $\spoly(\sigma'\cdot f, \tau'\cdot g) = h =
\sum_\nu f_\nu (\nu\cdot g_\nu)$ ($\nu\in\Sigma,f_\nu\in P,g_\nu\in G$)
is a \Gr\ representation with respect to $\Sigma\cdot G$ then also
$\spoly(\sigma\cdot f, \tau\cdot g) = \delta\cdot h =
\sum_\nu (\delta\cdot f_\nu) (\delta\nu\cdot g_\nu)$ is a \Gr\ representation
because $\prec$ is a $\Sigma$-ordering of $P$. We conclude that
the S-polynomials to be checked for \Gr\ representations may be restricted
to the ones satisfying both the conditions
$\gcd(\sigma\cdot\lm(f),\tau\cdot\lm(g))\neq 1$ and $\gcd(\sigma,\tau) = 1$.
\end{proof}

From the above result one obtains a variant of Buchberger's procedure
based on the ``$\Sigma$-criterion'' $\gcd(\sigma,\tau) = 1$ which is able
to compute \Gr\ $\Sigma$-bases. A standard routine that one needs in this method
is the following one.

\suppressfloats[b]
\begin{algorithm}\caption{\Reduce}
\begin{algorithmic}[0]
\State \text{Input:} $G\subset P$ and $f\in P$.
\State \text{Output:} $h\in P$ such that $f - h\in\langle G\rangle$
and $h = 0$ or $\lm(h)\notin\LM(G)$.
\State $h:= f$;
\While{ $h\neq 0$ and $\lm(h)\in\LM(G)$ }
\State choose $g\in G,g\neq 0$ such that $\lm(g)$ divides $\lm(h)$;
\State $h:= h - (\lt(h)/\lt(g)) g$;
\EndWhile;
\State \Return $h$.
\end{algorithmic}
\end{algorithm}

Note that even if $G$ may consist of an infinite number of polynomials,
the set of their leading monomials dividing $\lm(h)$ is always a finite one.
In other words, the ``choose'' instruction in the above routine can be
actually performed. Moreover, although the polynomial algebra
$P = K[X(\Sigma)]$ is infinitely generated, the existence of monomial
orderings for $P$ provides clearly the termination. By Proposition
\ref{sigmacrit} one obtains the correctness of the following procedure
for enumerating a \Gr\ $\Sigma$-basis of a $\Sigma$-ideal having
a finite $\Sigma$-basis.

\suppressfloats[b]
\floatname{algorithm}{Procedure}
\begin{algorithm}\caption{SigmaGBasis}
\begin{algorithmic}[0]
\State \text{Input:} $H$, a finite $\Sigma$-basis of a $\Sigma$-ideal
$I\subset P$.
\State \text{Output:} $G$, a \Gr\ $\Sigma$-basis of $I$.
\State $G:= \{g\in H\mid g\neq 0\}$;
\State $B:= \{(f,g) \mid f,g\in G\}$;
\While{$B\neq\emptyset$}
\State choose $(f,g)\in B$;
\State $B:= B\setminus \{(f,g)\}$;
\ForAll{$\sigma,\tau\in\Sigma$ such that $\gcd(\sigma,\tau) = 1,
\gcd(\sigma\cdot\lm(f),\tau\cdot\lm(g))\neq 1$}
\State $h:= \Reduce(\spoly(\sigma\cdot f,\tau\cdot g), \Sigma\cdot G)$;
\If{$h\neq 0$}
\State $B:= B\cup\{(g,h),(h,h) \mid g\in G\}$;
\State $G:= G\cup\{h\}$;
\EndIf;
\EndFor;
\EndWhile;
\State \Return $G$.
\end{algorithmic}
\end{algorithm}

For this procedure we do not have general termination owing to
non-Noetherianity of the algebra $P$. In fact, even if we assume that
the $\Sigma$-ideal $I\subset P$ has a finite $\Sigma$-basis,
this may be not true for its initial $\Sigma$-ideal $\LM(I)$, that is,
$I$ may have no finite \Gr\ $\Sigma$-basis. In the next section,
after introducing suitable monomial $\Sigma$-orderings of $P$ we will
give an algorithm which is able to compute in a finite number of steps
a finite \Gr\ $\Sigma$-basis whenever this exists. Note anyway that in the
above procedure all instructions can be actually performed.
In particular, for any pair of elements $f,g\in G$ and for all
$\sigma,\tau\in\Sigma$ there are only a finite number of
S-polynomials $\spoly(\sigma\cdot f,\tau\cdot g)$ satisfying both
the criteria $\gcd(\sigma,\tau) = 1$ and
$\gcd(\sigma\cdot\lm(f),\tau\cdot\lm(g))\neq 1$. A proof is given
by the arguments contained in Proposition \ref{finsigmacrit}
of the next section. Observe that the case $f = g$ has to be
considered whenever $\sigma\neq\tau$.
Finally, note that the chain criterion (see for instance \cite{GP})
can be added to \SigmaGBasis\ to shorten the number of S-polynomials
that have to be reduced. In fact, we can view this procedure as a
variant of the classical Buchberger's one applied to the basis
$\Sigma\cdot H$ of the ideal $I$ where Proposition \ref{sigmacrit}
provides the additional ``$\Sigma$-criterion'' to avoid useless pairs.
In other words, this is one way to actually implement the procedure
\SigmaGBasis\ (see \cite{LS}) in any commutative computer algebra system.

In the following sections we propose two possible solutions for
providing termination to \SigmaGBasis. First, we introduce a grading
on $P$ that is compatible with the action of $\Sigma$ which implies
that the truncated variant of this procedure with homogeneous
input stops in a finite number of steps. Another approach consists
in obtaining finite \Gr\ $\Sigma$-bases when elements with suitable
linear leading monomials belong to the given $\Sigma$-ideal $I$.
More precisely, we obtain the Noetherian property for a certain class
of (quotient) $\Sigma$-algebras $P/I$.

%%%%%%%%%%%%%%%%%%%%%%%%%%%%%%%%%%%%%%%%%%%%%%%%%%%%%%%%%%%%%%%%%%%%%%

\section{Grading and truncation}

A useful grading for the free $\Sigma$-algebra $P$ can be introduced
in the following way. Consider the set $\hN = \N\cup\{-\infty\}$ endowed
with the binary operations $\max$ and $+$. Clearly $(\hN,\max,+)$ is
a commutative semiring which is also idempotent since $\max(d,d) = d$,
for all $d\in\hN$. Moreover, for any $\sigma =
\prod_i \sigma_i^{\alpha_i}\in\Sigma$ we put $\deg(\sigma) = \sum_i \alpha_i$.

\begin{definition}
\label{deford}
Let $\ord:M\to\hN$ be the unique mapping such that
\begin{itemize}
\item[(i)] $\ord(1) = -\infty$;
\item[(ii)] $\ord(m n) = \max(\ord(m),\ord(n))$, for all $m,n\in M$;
\item[(iii)] $\ord(x_i(\sigma)) = \deg(\sigma)$, for any variable
$x_i(\sigma)\in X(\Sigma)$.
\end{itemize}
Then, the map $\ord$ is a monoid homomorphism from $(M,\cdot)$ to
$(\hN,\max)$. We call $\ord$ the {\em order function} of $P$.
\end{definition}

More explicitely,
if $m = x_{i_1}(\delta_1)^{\alpha_1}\cdots x_{i_k}(\delta_k)^{\alpha_k}
\in M = \Mon(P)$ is any monomial different from 1 ($x_{i_l}(\delta_l)\in X(\Sigma)$
and $\alpha_l > 0$, for each $1\leq l\leq k$) we have that
\[
\ord(m) = \max(\deg(\delta_1),\ldots,\deg(\delta_k)).
\]

\begin{Example}
Let $X = \{x,y\}$ and $\Sigma = \langle \sigma_1,\sigma_2,\sigma_3 \rangle$.
As in the Example \ref{monordex}, denote $X(\Sigma) =
\{x(i,j,k),y(i,j,k)\mid i,j,k\geq 0\}$. If we consider the monomial
\[
m = y(1,1,0)^2 x(1,0,1) x(1,0,0)^3 y(0,0,0)^4
\]
then $\ord(m) = 2$.
\end{Example}

Let $P_d = \langle\, m\in M \mid \ord(m) = d\, \rangle_K\subset P$,
that is, $P_d$ is the $K$-subspace of $P$ generated by all monomials
having order equal to $d$. A polynomial $f\in P_d$ is called
{\em ord-homogeneous} and we denote $\ord(f) = d$. By property (ii)
of Definition  \ref{deford} one has clearly that $P = \bigoplus_{d\in\hN} P_d$ is
a grading of the algebra $P$ over the commutative monoid $(\hN,\max)$.

\begin{proposition}
\label{ordgood}
The following properties hold for the order function:
\begin{itemize}
\item[(i)] $\ord(\sigma\cdot m) = \deg(\sigma) + \ord(m)$, for any
$\sigma\in\Sigma$ and $m\in M$;
\item[(ii)] $\ord(\lcm(m,n)) = \ord(m n) = \max(\ord(m),\ord(n))$,
for all $m,n\in M$. Therefore, if $m\mid n$ then
$\ord(m)\leq \ord(n)$.
\end{itemize}
\end{proposition}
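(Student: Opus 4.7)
The plan is to prove both parts essentially by direct inspection, leveraging the multiplicative behavior of $\ord$ (property (ii) of Definition \ref{deford}) together with the explicit description $\sigma\cdot x_i(\delta) = x_i(\sigma\delta)$ and the fact that $\deg:\Sigma\to\N$ is a monoid homomorphism from $(\Sigma,\cdot)$ to $(\N,+)$.

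For (i), I would first dispose of the trivial case $m=1$ (where both sides equal $-\infty$ under the convention that $-\infty$ is absorbing for $+$ in the semiring $\hN$) and then write a general $m\neq 1$ as
\[
m = x_{i_1}(\delta_1)^{\alpha_1}\cdots x_{i_k}(\delta_k)^{\alpha_k}.
\]
Since $\sigma$ acts on $P$ as a ring endomorphism and fixes the variable $x_{i_l}$ up to a shift, we have $\sigma\cdot m = \prod_l x_{i_l}(\sigma\delta_l)^{\alpha_l}$. Applying properties (ii) and (iii) of Definition \ref{deford} and using $\deg(\sigma\delta_l)=\deg(\sigma)+\deg(\delta_l)$ gives
\[
\ord(\sigma\cdot m)=\max_l \deg(\sigma\delta_l)=\deg(\sigma)+\max_l\deg(\delta_l)=\deg(\sigma)+\ord(m),
\]
where the second equality uses that $\deg(\sigma)$ is a common summand that may be factored out of $\max$.

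For (ii), the identity $\ord(mn)=\max(\ord(m),\ord(n))$ is just property (ii) of Definition \ref{deford}, i.e.\ the statement that $\ord$ is a homomorphism from $(M,\cdot)$ to $(\hN,\max)$. From this it follows immediately that $m\mid n$ implies $\ord(m)\leq\ord(n)$: writing $n=mq$ we get $\ord(n)=\max(\ord(m),\ord(q))\geq\ord(m)$. To get the statement about $\lcm$, I would combine the two divisibilities $m\mid\lcm(m,n)$, $n\mid\lcm(m,n)$ and $\lcm(m,n)\mid mn$: monotonicity yields $\max(\ord(m),\ord(n))\leq\ord(\lcm(m,n))\leq\ord(mn)=\max(\ord(m),\ord(n))$, forcing equality throughout.

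There is no real obstacle here; the only mild subtlety is bookkeeping around the value $-\infty$ and confirming that the semiring conventions make the identity in (i) work also when $m=1$ or when $\deg(\sigma)=0$. Once those edge cases are handled, the proof reduces to unfolding the definitions.
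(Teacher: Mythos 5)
Your proof is correct and for part (i) it is essentially identical to the paper's: dispose of $m=1$, write $m$ as a product of powers of variables, and factor $\deg(\sigma)$ out of the $\max$. For part (ii) the paper simply observes that the order of a monomial depends only on which variables occur in it (not on their exponents), so $\lcm(m,n)$ and $mn$ trivially have the same order; your squeeze via divisibility monotonicity reaches the same conclusion with equal ease, so the difference is purely cosmetic.
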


\begin{proof}
If $m = 1$ then $\ord(\sigma\cdot m) = \ord(m) = -\infty =
\deg(\sigma) + \ord(m)$. If otherwise $m =
x_{i_1}(\delta_1)^{\alpha_1}\cdots x_{i_k}(\delta_k)^{\alpha_k}$
then $\sigma\cdot m =
x_{i_1}(\sigma\delta_1)^{\alpha_1}\cdots x_{i_k}(\sigma\delta_k)^{\alpha_k}$
and hence $\ord(\sigma\cdot m) =
\max(\deg(\sigma\delta_1),\ldots,\deg(\sigma\delta_k)) =
\deg(\sigma) + \max(\deg(\delta_1),\ldots,\deg(\delta_k)) =
\deg(\sigma) + \ord(m)$. To prove (ii) it is sufficient to note that the order
of a monomial does not depend on the exponents of the variables occurring
in it.
\end{proof}

\begin{definition}
An ideal $I\subset P$ is called {\em $\ord$-graded} if $I = \sum_d I_d$
with $I_d = I\cap P_d$. Note that if $I$ is in addition a $\Sigma$-ideal
then by (i) of Proposition \ref{ordgood} one has that $\sigma\cdot I_d\subset
I_{\deg(\sigma) + d}$, for any $\sigma\in\Sigma$ and $d\in\hN$.
\end{definition}

Let $f,g\in P,f\neq g$ be any pair of $\ord$-homogeneous elements.
Then, the S-polynomial $h = \spoly(f,g)$ is also $\ord$-homogeneous and
by (ii) of Proposition \ref{ordgood} one has that $\ord(h) =
\max(\ord(f),\ord(g))$. If $\ord(f),\ord(g)\leq d$ for some $d\in\N$,
we have therefore that $\ord(h)\leq d$ which implies the following result.

\begin{proposition}[Termination by truncation]
\label{ordtermin}
Let $I\subset P$ be an $\ord$-graded $\Sigma$-ideal and let $d\in\N$.
Assume there is an $\ord$-homogeneous $\Sigma$-basis $H\subset I$ such that
$H_d = \{f\in H\mid \ord(f)\leq d\}$ is a finite set. Then, there exists
also an $\ord$-homogeneous \Gr\ $\Sigma$-basis $G$ of $I$ such that $G_d$
is a finite set. In other words, if one uses for \SigmaGBasis\
a selection strategy of the S-polynomials based on their orders then
the $d$-truncated variant of \SigmaGBasis\ with input $H_d$ terminates
in a finite number of steps.
\end{proposition}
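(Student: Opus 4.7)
The plan is to confine the truncated computation to a finitely generated, hence Noetherian, subring of $P$ and then invoke classical Buchberger termination there. The key observation is that, since $\Sigma \cong \N^r$, the set $\{\sigma \in \Sigma : \deg(\sigma) \leq d\}$ is finite, so the subalgebra
\[
P_{\leq d} := K[\{x_i(\sigma) \in X(\Sigma) : \deg(\sigma) \leq d\}] \subset P
\]
is a polynomial ring in finitely many variables and hence Noetherian. Combining properties (ii) and (iii) of Definition \ref{deford}, $\ord(m)$ equals the maximum of $\deg(\delta)$ over all variables $x_i(\delta)$ appearing in $m$; consequently every $\ord$-homogeneous polynomial of order $\leq d$ already lies in $P_{\leq d}$. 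In particular, this holds for every element of $H_d$ and for every shift $\sigma \cdot f$ with $f \in H_d$ and $\deg(\sigma) + \ord(f) \leq d$.

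Next I would verify that the $d$-truncated \SigmaGBasis\ stays inside $P_{\leq d}$. For $\ord$-homogeneous $f, g$ of order $\leq d$ and arbitrary $\sigma, \tau \in \Sigma$, Proposition \ref{ordgood}(i) gives $\ord(\sigma \cdot f) = \deg(\sigma) + \ord(f)$, so by (ii)
\[
\ord(\spoly(\sigma \cdot f, \tau \cdot g)) = \max(\deg(\sigma) + \ord(f),\, \deg(\tau) + \ord(g)).
\]
Hence only finitely many $(\sigma, \tau)$ can produce an S-polynomial of order $\leq d$. Reducing such an S-polynomial by an $\ord$-homogeneous polynomial of order $\leq d$ preserves $\ord$-homogeneity and cannot raise the order (again by Proposition \ref{ordgood}(ii), since the reducer's leading monomial divides that of the reducee). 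Therefore every polynomial produced by the truncated procedure lies in $P_{\leq d}$, and so do all reducers actually used by \Reduce.

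With these preparations the $d$-truncated procedure, run under a selection strategy that processes S-polynomials by ascending $\ord$-value, becomes an instance of ordinary Buchberger's algorithm applied to the finite set $\{\sigma \cdot f : f \in H_d,\ \deg(\sigma) + \ord(f) \leq d\}$ inside the Noetherian ring $P_{\leq d}$. Termination in finitely many steps then follows from the classical Buchberger theorem, yielding a finite set $G_d$. Continuing the full (possibly nonterminating) \SigmaGBasis\ beyond order $d$ produces an $\ord$-homogeneous \Gr\ $\Sigma$-basis $G$ of $I$ whose order-$\leq d$ part is exactly this finite $G_d$, since no S-polynomial of order $\leq d$ can be produced later.

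The main technical point, and where the content of the argument really sits, is the identification in the third paragraph with a genuine Buchberger computation in $P_{\leq d}$: one must check that throughout the process \Reduce\ is invoked only with reducee and reducers in $P_{\leq d}$, and that the $\Sigma$-criterion of Proposition \ref{sigmacrit}, together with the order-based selection strategy, never forces consideration of shifts that would push an S-polynomial above order $d$. Both checks follow cleanly from Proposition \ref{ordgood}, but they need to be stated carefully in order for the Noetherian reduction to apply honestly.
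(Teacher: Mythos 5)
Your proof is correct and follows essentially the same route as the paper's: both confine the $d$-truncated computation to a finitely generated polynomial subalgebra (the paper takes the variables occurring in the finite set $H'_d = \{\sigma\cdot f \mid f\in H,\ \deg(\sigma)+\ord(f)\le d\}$, you take all variables $x_i(\sigma)$ with $\deg(\sigma)\le d$) and then conclude by Noetherianity and classical Buchberger termination. Your write-up is somewhat more explicit than the paper's in verifying that the S-polynomials and the reducers actually used never leave that subalgebra, a point the paper leaves implicit.
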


\begin{proof}
In the procedure \SigmaGBasis\ one computes a subset $G$ of a \Gr\ basis
$G' = \Sigma\cdot G$ obtained by applying Buchberger's procedure to
the basis $H' = \Sigma\cdot H$ of the ideal $I$. Moreover, Proposition \ref{ordgood}
implies that the set $H'$ and hence $G'$ consists of $\ord$-homogeneous
elements. Define hence $H'_d = \{\sigma\cdot f\mid\sigma\in\Sigma,f\in H,
\deg(\sigma) + \ord(f)\leq d\}$. Note that $\Sigma_d = \{\sigma\in\Sigma\mid
\deg(\sigma)\leq d\}$ is clearly a finite set and by hypothesis we have that
$H_d$ is also a finite one. We conclude that $H'_d\subset \Sigma_d\cdot H_d$
is a finite set. Denote now by $Y_d$ the finite set of variables of $P$
occurring in the elements of $H'_d$ and define the subalgebra $P_{(d)} =
K[Y_d]\subset P$. In fact, the $d$-truncated variant of \SigmaGBasis\
computes a subset of a \Gr\ basis of the ideal $I_{(d)} \subset P_{(d)}$
generated by $H'_d$. The Noetherianity of the finitely generated polynomial
algebra $P_{(d)}$ provides then termination.
\end{proof}

Note that this result implies an algorithmic solution to the ideal membership
for finitely generated $\ord$-graded $\Sigma$-ideals. Another consequence
of the grading defined by the order function is that one has a criterion,
also in the non-graded case, for verifying that a $\Sigma$-basis computed
by the procedure \SigmaGBasis\ using a finite number of variables
of $P$ is a complete finite \Gr\ $\Sigma$-basis, whenever this basis exists.
This is of course important because actual computations can be only performed
over a finite number of variables.

\begin{definition}
Let $\prec$ be a monomial $\Sigma$-ordering of $P$. We say that $\prec$ is
{\em compatible with the order function} if $\ord(m) < \ord(n)$ implies that
$m\prec n$, for all $m,n\in M$.
\end{definition}

\begin{proposition}
Denote by $\prec$ the monomial $\Sigma$-ordering of $P$ defined in Proposition
\ref{weightord} and let $<$ be the monomial ordering of $Q =
K[\sigma_1,\ldots,\sigma_r]$ which is used to define $\prec$. Assume that $<$
is compatible with the function $\deg$, that is, $\deg(\sigma) < \deg(\tau)$
implies that $\sigma < \tau$, for any $\sigma,\tau\in\Sigma$. Then, one has
that $\prec$ is compatible with the function $\ord$.
\end{proposition}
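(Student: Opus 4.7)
The plan is to unpack the definition of $\prec'$ in terms of the block factorization, and then use the hypothesis that $<$ is $\deg$-compatible to show that whenever $\ord(m)<\ord(n)$, the first block where $m$ and $n$ differ is one in which $m$ contributes the trivial factor $1$.

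First, given $m,n\in M$ with $\ord(m)<\ord(n)$, I would write $m=m_1\cdots m_k$, $n=n_1\cdots n_k$ with $m_i,n_i\in M(\delta_i)$ and $\delta_1>\cdots>\delta_k$, exactly as in Proposition \ref{weightord}. Since $n\neq 1$ (otherwise $\ord(n)=-\infty$), pick the unique $i^\ast$ such that $n_{i^\ast}\neq 1$ and $\deg(\delta_{i^\ast})=\ord(n)$; note that by compatibility of $<$ with $\deg$, among the $\delta_j$ with $n_j\neq 1$, the one of maximal degree is also maximal under $<$, so $i^\ast$ is well-defined (it is the smallest index with $n_{i^\ast}\neq 1$ that achieves the maximal $\deg$, and in fact it is just the smallest index with $n_{i^\ast}\neq 1$).

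Next, I would show that $m_j=1$ for every $j\le i^\ast$. Indeed, if $m_j\neq 1$ for some such $j$, then $\deg(\delta_j)\le\ord(m)<\ord(n)=\deg(\delta_{i^\ast})$, so by $\deg$-compatibility $\delta_j<\delta_{i^\ast}$, which forces $j>i^\ast$ in the decreasing arrangement, a contradiction. In particular $m_{i^\ast}=1\neq n_{i^\ast}$, so the smallest index $i\le i^\ast$ at which $m_i\neq n_i$ exists, and at that index $m_i=1$ while $n_i\neq 1$. Since $\prec$ is a monomial ordering on $K[X(\delta_i)]$, property (iii) of Definition \ref{monord} gives $m_i=1\prec n_i$, and so $m\prec' n$ by the definition of the block ordering.

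The argument is essentially bookkeeping; the only real content is the implication ``$\deg(\delta_j)<\deg(\delta_{i^\ast})\Rightarrow \delta_j<\delta_{i^\ast}$,'' which is exactly the hypothesis placed on $<$. I do not anticipate any serious obstacle; the mild subtlety is to resist conflating ``first block where they differ in the $<$-arrangement'' with ``block of largest $\deg$ appearing,'' and to verify that $\deg$-compatibility makes these coincide in the situation at hand. The degenerate case $m=1$ is covered by the same argument, since then every $m_j=1$ and the first differing block is the first $j$ with $n_j\neq 1$.
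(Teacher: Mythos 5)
Your proof is correct and rests on the same ingredients as the paper's: the block factorization $m=m_1\cdots m_k$, $n=n_1\cdots n_k$ from Proposition \ref{weightord} together with the observation that $\deg$-compatibility of $<$ forces $\deg(\delta_1)\geq\cdots\geq\deg(\delta_k)$. The only difference is one of direction: you argue directly that $\ord(m)<\ord(n)$ implies $m\prec n$, while the paper proves the equivalent contrapositive form ($m\prec n$ implies $\ord(m)\leq\ord(n)$); your phrase ``the unique $i^\ast$'' is a slight overstatement, but you immediately correct it to ``the smallest index with $n_{i^\ast}\neq 1$,'' which is all the argument needs.
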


\begin{proof}
Let $m = m_1\cdots m_k,n = n_1\cdots n_k$ be any pair of monomials of $P$,
where $m_i,n_i\in M(\delta_i)$ $(\delta_i\in\Sigma)$ and
$\delta_1 > \ldots > \delta_k$ (hence $\deg(\delta_1)\geq\ldots\geq\deg(\delta_k)$).
Assume $m\prec n$, that is, there is $1\leq i\leq k$ such that
$m_j = n_j$ when $j < i$ and $m_i\prec n_i$. If $i > 1$ or $m_i\neq 1$
one has clearly $\ord(m) = \ord(n) = \deg(\delta_1)$. Otherwise, we conclude
that $\ord(m) \leq \deg(\delta_1) = \ord(n)$.
\end{proof}

As before, we denote $\Sigma_d = \{\sigma\in\Sigma \mid \deg(\sigma)\leq d\}$.

\begin{proposition}[Finite $\Sigma$-criterion]
\label{finsigmacrit}
Assume that $P$ is endowed with a monomial $\Sigma$-ordering compatible with
the order function. Let $G\subset P$ be a finite set and define the $\Sigma$-ideal
$I = \langle G \rangle_\Sigma$.  Moreover, denote $d = \max\{\ord(\lm(g))\mid
g\in G,g\neq 0\}$. Then, $G$ is a \Gr\ $\Sigma$-basis of $I$ if and only if
for all $f,g\in G,f,g\neq 0$ and for any $\sigma,\tau\in\Sigma$ such that
$\gcd(\sigma,\tau) = 1$ and $\gcd(\sigma\cdot\lm(f),\tau\cdot\lm(g))\neq 1$,
the S-polynomial $\spoly(\sigma\cdot f,\tau\cdot g)$ has a \Gr\ representation
with respect to the finite set $\Sigma_{2d}\cdot G$.
\end{proposition}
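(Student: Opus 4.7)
The plan is to bootstrap from the $\Sigma$-criterion of Proposition~\ref{sigmacrit}, which already gives the ``only if'' direction with representations in $\Sigma\cdot G$ instead of $\Sigma_{2d}\cdot G$. The ``if'' direction is then automatic since $\Sigma_{2d}\cdot G\subset\Sigma\cdot G$, so the real problem is to refine an arbitrary \Gr\ representation supplied by Proposition~\ref{sigmacrit} into one that uses only translates in $\Sigma_{2d}\cdot G$. I will obtain this through two bounds derived from the order function and its compatibility with $\prec$.

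First I would show that the critical pairs $(\sigma,\tau)$ automatically satisfy $\deg(\sigma),\deg(\tau)\leq d$, so that $\sigma,\tau\in\Sigma_d$ (in particular only finitely many pairs arise, which incidentally gives the finiteness remark already used before the proposition). The hypothesis $\gcd(\sigma\cdot\lm(f),\tau\cdot\lm(g))\neq 1$ produces a variable $x_i(\delta)$ shared by both monomials, which translates into $\sigma\mu=\tau\nu$ for some $x_i(\mu)\mid\lm(f)$ and $x_i(\nu)\mid\lm(g)$. Writing $\sigma=\prod_j\sigma_j^{a_j}$, $\tau=\prod_j\sigma_j^{b_j}$, $\mu=\prod_j\sigma_j^{m_j}$, $\nu=\prod_j\sigma_j^{n_j}$, the identity $a_j+m_j=b_j+n_j$ combined with $\min(a_j,b_j)=0$ (from $\gcd(\sigma,\tau)=1$) yields $b_j\leq m_j$ exactly when $a_j=0$ and $b_j=0$ otherwise; summing gives $\deg(\tau)\leq\deg(\mu)\leq\ord(\lm(f))\leq d$, and symmetrically for $\sigma$.

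Second, with these bounds in hand, Proposition~\ref{ordgood} immediately yields
$\ord(\lm(\spoly(\sigma\cdot f,\tau\cdot g)))\leq\ord(\lcm(\sigma\cdot\lm(f),\tau\cdot\lm(g)))\leq 2d$.
Now take any \Gr\ representation $\spoly(\sigma\cdot f,\tau\cdot g)=\sum_\nu f_\nu(\nu\cdot g_\nu)$ with respect to $\Sigma\cdot G$. Compatibility of $\prec$ with the order function is easily shown to entail $m\preceq n\Rightarrow\ord(m)\leq\ord(n)$ (otherwise the defining implication $\ord(m)<\ord(n)\Rightarrow m\prec n$ applied to the reverse inequality would contradict $m\preceq n$). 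Applying this to the defining inequality $\lm(f_\nu)(\nu\cdot\lm(g_\nu))\preceq\lm(\spoly(\sigma\cdot f,\tau\cdot g))$ and using $\ord(\nu\cdot\lm(g_\nu))=\deg(\nu)+\ord(\lm(g_\nu))$ forces $\deg(\nu)+\ord(\lm(g_\nu))\leq 2d$, hence $\nu\in\Sigma_{2d}$ whenever $\lm(g_\nu)\neq 1$. The leftover case where some $g_\nu$ is a nonzero constant forces $I=P$ and is trivial, so the representation can always be taken in $\Sigma_{2d}\cdot G$.

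The main obstacle I expect is the first step: one must genuinely exploit the particular factorization $\sigma\mu=\tau\nu$ coming from a shared variable together with $\gcd(\sigma,\tau)=1$, rather than a generic lcm estimate on shift monoids, in order to pin down $\deg(\sigma),\deg(\tau)\leq d$ and not a weaker $r$-dependent bound. Everything afterwards is a careful transfer of these degree estimates through the order function and its compatibility with $\prec$.
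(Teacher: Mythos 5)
Your proposal is correct and follows essentially the same route as the paper's proof: the shared variable forced by $\gcd(\sigma\cdot\lm(f),\tau\cdot\lm(g))\neq 1$ together with $\gcd(\sigma,\tau)=1$ gives $\deg(\sigma),\deg(\tau)\leq d$, hence $\ord$ of the lcm is at most $2d$, and compatibility of $\prec$ with $\ord$ transfers this bound to every summand of a \Gr\ representation, forcing $\nu\in\Sigma_{2d}$. Your explicit componentwise verification of $\tau\mid\mu$ and your handling of the degenerate case $\lm(g_\nu)=1$ are small refinements of details the paper leaves implicit, not a different argument.
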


\begin{proof}
Let $\spoly(\sigma\cdot f,\tau\cdot g) = h = \sum_\nu f_\nu (\nu\cdot g_\nu)$
be a \Gr\ representation with respect to $\Sigma\cdot G$, that is,
$\lm(h)\succeq \lm(f_\nu)(\nu\cdot \lm(g_\nu))$, for all $\nu$. We want to
bound the degree of the elements $\nu\in\Sigma$ occurring in this representation.
Put $m = \lm(f),n = \lm(g)$ and hence $\lm(\sigma\cdot f) =
\sigma\cdot m,\lm(\sigma\cdot g) = \sigma\cdot n$. By the product criterion
one has that $u = \gcd(\sigma\cdot m,\tau\cdot n)\neq 1$, that is,
there is a common variable $x_i(\sigma \alpha) = x_i(\tau \beta)$ dividing $u$
where $x_i(\alpha)$ divides $m$ and $x_i(\beta)$ divides $n$. Therefore
$\sigma \alpha = \tau \beta$ and we have that $\deg(\alpha)\leq\ord(m)\leq d$
and $\deg(\beta)\leq\ord(n)\leq d$. From $\sigma \alpha = \tau \beta$ and the
$\Sigma$-criterion $\gcd(\sigma,\tau) = 1$ it follows that
$\sigma\mid \beta,\tau\mid \alpha$ and hence $\deg(\sigma),\deg(\tau)\leq d$.
If $v = \lcm(\sigma\cdot m,\tau\cdot m)$ then we have that
$\ord(v) = \max(\deg(\sigma) + \ord(m),\deg(\tau) + \ord(n))\leq 2d$.
Clearly $v\succ \lm(h)\succeq \nu\cdot\lm(g_\nu)$ and therefore $2d\geq\ord(v)
\geq \deg(\nu) + \ord(\lm(g_\nu))\geq \deg(\nu)$.
In other words, we have that all elements $\nu$ belong to $\Sigma_{2d}$, that is,
$\spoly(\sigma\cdot f,\tau\cdot g) = \sum_\nu f_\nu (\nu\cdot g_\nu)$
is in fact a \Gr\ representation with respect to the set $\Sigma_{2d}\cdot G$.
\end{proof}

Under the assumption of a $\Sigma$-ordering compatible with the order function
and for $\Sigma$-ideals that admit finite \Gr\ $\Sigma$-bases, by the above
criterion one obtains an algorithm to compute such a basis in a finite number
of steps. In fact, this can be obtained as an adaptative procedure that keeps
the bound $2d$ for the degree of the elements of $\Sigma$ applied to the generators,
constantly updated with respect to the maximal order $d$ of the leading monomials
of the current generators. In other words, if we denote by $\SigmaGBasis(H,d)$
the variant of the procedure $\SigmaGBasis(H)$ when one substitutes $\Sigma$
with $\Sigma_d$, then we have the following algorithm.

\suppressfloats[b]
\floatname{algorithm}{Algorithm}
\begin{algorithm}\caption{SigmaGBasis2}
\begin{algorithmic}[0]
\State \text{Input:} $H$, a finite $\Sigma$-basis of a $\Sigma$-ideal
$I\subset P$ such that $\LM(I)$ has also
\State a finite $\Sigma$-basis.
\State \text{Output:} $G$, a finite \Gr\ $\Sigma$-basis of $I$.
\State $G:= \{g\in H\mid g\neq 0\}$;
\State $d':= -\infty$;
\State $d = \max\{\ord(\lm(g))\mid g\in G\}$;
\While{$d' < 2d$}
\State $d' = 2d$;
\State $G:= \SigmaGBasis(G,d')$;
\State $d = \max\{\ord(\lm(g))\mid g\in G\}$;
\EndWhile;
\State \Return $G$.
\end{algorithmic}
\end{algorithm}

Of course, the above algorithm may be refined to avoid a complete recomputation
at each step.

%%%%%%%%%%%%%%%%%%%%%%%%%%%%%%%%%%%%%%%%%%%%%%%%%%%%%%%%%%%%%%%%%%%%%%

\section{An illustrative example}

In this section we apply the procedure \SigmaGBasis\ to an example arising
from the discretization of a well-known system of partial differential
equations. Consider the unsteady two-dimensional motion of an incompressible
viscous liquid of constant viscosity which is governed by the following system
\[
\left\{
\begin{array}{l}
\displaystyle
u_x + v_y = 0, \\
\vspace{-8pt} \\
\displaystyle
u_t + u u_x + v u_y + p_x - \frac{1}{\rho}(u_{xx} + u_{yy}) = 0, \\
\vspace{-8pt} \\
\displaystyle
v_t + u v_x + v v_y + p_y - \frac{1}{\rho}(v_{xx} + v_{yy}) = 0.
\end{array}
\right.
\]
The last two nonlinear equations are the Navier-Stokes equations and the first
linear equation is the continuity one. Equations are given in the dimensionless
form where $(u,v)$ represents the velocity field and the function $p$ is the
pressure. The parameter $\rho$ denotes the Reynolds number. For defining a finite
difference approximation of this system one has therefore to fix $X = \{u,v,p\}$
and $\Sigma = \langle \sigma_1,\sigma_2,\sigma_3 \rangle$ since all functions
are trivariate ones. To simplify the notation of the variables in $X(\Sigma)$,
we identify $\Sigma$ with the additive monoid $\N^3$ and we denote
$P = K[X(\Sigma)] = K[u(i,j,k),v(i,j,k),p(i,j,k)\mid i,j,k\geq 0]$. The base
field $K$ is the field of rational numbers. The approximation of the derivatives
of the function $u$ is given by the following formulas (forward differences)
\begin{equation*}
\begin{gathered}
u_x \approx \frac{u(x + h,y,t) - u(x,y,t)}{h} = \frac{u(1,0,0) - u(0,0,0)}{h}, \\
u_y \approx \frac{u(x,y + h,t) - u(x,y,t)}{h} = \frac{u(0,1,0) - u(0,0,0)}{h}, \\
u_t \approx \frac{u(x,y,t + h) - u(x,y,t)}{h} = \frac{u(0,0,1) - u(0,0,0)}{h}, \\
u_{xx} \approx \frac{u(x + 2h,y,t) - 2u(x + h,y,t) + u(x,y,t)}{h^2} =
\frac{u(2,0,0) - 2 u(1,0,0) + u(0,0,0)}{h^2}, \\
u_{yy} \approx \frac{u(x,y + 2h,t) - 2u(x,y + h,t) + u(x,y,t)}{h^2} =
\frac{u(0,2,0) - 2 u(0,1,0) + u(0,0,0)}{h^2} \\
\end{gathered}
\end{equation*}
where $h$ is a parameter (mesh step). One has similar approximations
for the derivatives of the functions $v,p$. If we put $H = \rho h$
then the Navier-Stokes system is approximated by the following system
of partial difference equations
\[
\left\{
\begin{array}{l}
\displaystyle
f_1 := u(1,0,0) + v(0,1,0) - u(0,0,0) - v(0,0,0)) = 0, \\
\vspace{-8pt} \\
\displaystyle
f_2 := (-u(2,0,0) -u(0,2,0) +2u(1,0,0) +2u(0,1,0) -2u(0,0,0)) \\
\vspace{-8pt} \\
\displaystyle
\quad
+\,H(p(1,0,0) +u(0,0,1) -p(0,0,0) -u(0,0,0)^2 \\
\vspace{-8pt} \\
\displaystyle
\quad
-\,(1 +v(0,0,0) -u(1,0,0)) u(0,0,0)  +u(0,1,0) v(0,0,0)) = 0, \\
\vspace{-8pt} \\
\displaystyle
f_3 := (-v(2,0,0) -v(0,2,0) +2v(1,0,0) +2v(0,1,0) -2v(0,0,0)) \\
\vspace{-8pt} \\
\displaystyle
\quad
+\,H(p(0,1,0) +v(0,0,1) -p(0,0,0) -v(0,0,0)^2 \\
\vspace{-8pt} \\
\displaystyle
\quad
+(v(1,0,0) -v(0,0,0)) u(0,0,0) -(1 -v(0,1,0)) v(0,0,0)) = 0.
\end{array}
\right.
\]
We encode this system as the $\Sigma$-ideal $I =
\langle f_1,f_2,f_3 \rangle_\Sigma\subset P$ and we want to compute a (hopefully
finite) \Gr\ $\Sigma$-basis of $I$. We may want to have such a basis
to check for the ``strong-consistency'' \cite{Ge12} of the finite difference
approximation that we are using. In fact, this property is necessary for
inheritance at the discrete level of the algebraic properties of the differential
equations. For instance, in \cite{ABGLS} we have compared the numerical behavior
of three different finite difference approximations of the Navier-Stokes equations
where just one of them is strongly consistent. The computational experiments
have confirmed the superiority of the strongly consistent approximation.
In the limit when the mesh steps go to zero, the elements in the difference
\Gr\ basis of the finite difference approximation under consideration become
differential polynomials. Then, the strong consistency holds if and only if
the latter polynomials belong to the radical differential ideal generated by
the polynomials in the input differential equations. Note that this membership
test can be done algorithmically by using the {\em diffalg} library \cite{BH}
or the differential Thomas decomposition \cite{BGLHR}. 

To perform \SigmaGBasis, we fix now the degree reverse lexicographic ordering
on the polynomial algebra $K[\sigma_1,\sigma_2,\sigma_3]$
($\sigma_1 > \sigma_2 > \sigma_3$) and the lexicographic ordering on $K[u,v,p]$
($u\succ v\succ p$). By Proposition \ref{weightord} one obtains then
a (block) monomial $\Sigma$-ordering for $P$ which is in fact the lexicographic
ordering such that
\[
\begin{array}{l}
\ldots \succ
u(2, 0, 0)\succ v(2, 0, 0)\succ p(2, 0, 0)\succ
u(1, 1, 0)\succ v(1, 1, 0)\succ p(1, 1, 0)\succ \\
\hphantom{\ldots \succ\ }
u(0, 2, 0)\succ v(0, 2, 0)\succ p(0, 2, 0)\succ
u(1, 0, 1)\succ v(1, 0, 1)\succ p(1, 0, 1)\succ \\
\hphantom{\ldots \succ\ }
u(0, 1, 1)\succ v(0, 1, 1)\succ p(0, 1, 1)\succ
u(0, 0, 2)\succ v(0, 0, 2)\succ p(0, 0, 2)\succ \\
\hphantom{\ldots \succ\ }
u(1, 0, 0)\succ v(1, 0, 0)\succ p(1, 0, 0)\succ
u(0, 1, 0)\succ v(0, 1, 0)\succ p(0, 1, 0)\succ \\
\hphantom{\ldots \succ\ }
u(0, 0, 1)\succ v(0, 0, 1)\succ p(0, 0, 1)\succ
u(0, 0, 0)\succ v(0, 0, 0)\succ p(0, 0, 0).
\end{array}
\]
Note that this ordering is compatible with the order function and hence
Proposition \ref{finsigmacrit} is applicable to certify completeness of a \Gr\
$\Sigma$-basis computed over some finite set of variables
$\{u(i,j,k),v(i,j,k),p(i,j,k)\mid i + j + k\leq d\}$.

With respect to the monomial ordering assigned to $P$, the leading monomials
of the $\Sigma$-generators of $I$ are $\lm(f_1) = u(1,0,0), \lm(f_2) = u(2,0,0),
\lm(f_3) = v(2,0,0)$. Since $\sigma_1\cdot \lm(f_1) = \lm(f_2)$, by interreducing
$f_2$ with respect to the set $\Sigma\cdot \{f_1,f_3\}$ we obtain the element
\[
\begin{array}{l}
\displaystyle
f'_2 := v(1,1,0) -u(0,2,0) -v(1,0,0) \\
\vspace{-8pt} \\
\displaystyle
\quad
+\,2u(0,1,0) -v(0,1,0) -u(0,0,0) +v(0,0,0)  \\
\vspace{-8pt} \\
\displaystyle
\quad
+\,H(p(1,0,0) +u(0,0,1) -p(0,0,0) \\
\vspace{-8pt} \\
\displaystyle
\quad
-\,(1 +v(0,1,0)) u(0,0,0) +u(0,1,0) v(0,0,0))
\end{array}
\]
whose leading monomial is $\lm(f'_2) = v(1,1,0)$. Owing to the $\Sigma$-criterion,
the only S-polynomial to consider is then
$\spoly(\sigma_1\cdot f'_2,\sigma_2\cdot f_3)$ whose reduction with respect
to $\Sigma\cdot\{f_1,f'_2,f_3\}$ leads to the new element
\[
\begin{array}{l}
\displaystyle
f_4:= p(2,0,0) +p(0,2,0) -2(p(1,0,0) + p(0,1,0) - p(0,0,0)) \\
\vspace{-8pt} \\
\displaystyle
\quad
-\,2u(0,1,0)^2 -v(0,2,0) v(1,0,0) -u(0,0,0)^2 +2v(0,0,0)^2 \\
\vspace{-8pt} \\
\displaystyle
\quad
+\,(3u(0,1,0) -2v(1,0,0) +v(0,1,0) -u(0,2,0) +v(0,0,0)) u(0,0,0) \\
\vspace{-8pt} \\
\displaystyle
\quad
-\,(3v(0,1,0) +u(0,2,0) +v(1,0,0)) v(0,0,0) \\
\vspace{-8pt} \\
\displaystyle
\quad
+\,(2v(1,0,0) -2v(0,1,0) + u(0,2,0)) u(0,1,0) \\
\vspace{-8pt} \\
\displaystyle
\quad
+\,(2v(1,0,0) + u(0,2,0) + v(0,2,0)) v(0,1,0) \\
\vspace{-8pt} \\
\displaystyle
\quad
+\,H( (u(0,1,0) +v(0,1,0)) p(0,0,0) -(u(0,1,0) +v(0,1,0)) u(0,0,1) \\
\vspace{-8pt} \\
\displaystyle
\quad
-\,p(1,0,0) v(0,1,0) -p(1,0,0) u(0,1,0) -(v(0,1,0) + 1) u(0,0,0)^2 \\
\vspace{-8pt} \\
\displaystyle
\quad
+\,(p(1,0,0) -p(0,0,0) +u(0,0,1) +v(0,1,0) \\
\vspace{-8pt} \\
\displaystyle
\quad
+(u(0,1,0)-v(0,1,0)-1) v(0,0,0) \\
\vspace{-8pt} \\
\displaystyle
\quad
+\,(v(0,1,0)+1) u(0,1,0) +v(0,1,0)^2) u(0,0,0) +u(0,1,0) v(0,0,0)^2 \\
\vspace{-8pt} \\
\displaystyle
\quad
+\,(p(1,0,0) -p(0,0,0) +u(0,0,1) -u(0,1,0) v(0,1,0) -u(0,1,0)^2) v(0,0,0)).
\end{array}
\]
The leading monomial of this difference polynomial is $\lm(f_4) = p(2,0,0)$
and no more S-polynomials have to be considered. We conclude that the set
$\{f_1,f'_2,f_3,f_4\}$ is a (finite) \Gr\ $\Sigma$-basis of the
$\Sigma$-ideal $I\subset P$. Since we make use of a monomial $\Sigma$-ordering
for $P$, this is equivalent to say that $\Sigma\cdot\{f_1,f'_2,f_3,f_4\}$
is a \Gr\ basis of the ideal $I$ and this can be verified also by applying
the classical \Gr\ bases routines to a proper truncation of the basis
$\Sigma\cdot \{f_1,f_2,f_3\}$. In fact, because the maximal order in
the input generators is 2, by Proposition \ref{finsigmacrit} it is reasonable
to bound initially the order of the variables of $P$ to 4 or 5. 
Even if it is not the case in this example, observe that the maximal order
in the elements of a \Gr\ $\Sigma$-basis may grow during the computation.
Therefore, as a general strategy, we suggest to bound the variables order
to a value which is reasonably greater than the double of the input maximal
order. The computing time for obtaining a \Gr\ basis of $I$ with
the implementation in Maple of Faug\`ere's F4 algorithm amounts to
20 seconds for order 4 and 5 hours for order 5 on our laptop Intel Core 2
Duo at 2.10 GHz with 8 GB RAM. By the procedure \SigmaGBasis\ that we
implemented in the Maple language as a variant of Buchberger's one
(see \cite{LS}), the computing time for a \Gr\ $\Sigma$-basis of $I$
is instead 0 seconds for order 4 and 3 seconds for order 5 since just two
reductions are needed. In other words, this speed-up is due to the
$\Sigma$-criterion which decreases drastically the number of S-polynomial
reductions which sometimes are very time-consuming. Note finally that the
verification method of the property of strong consistency applied to the
computed difference \Gr\ basis shows that the finite difference approximation
$\{f_1,f_2,f_3\}$ of the Navier-Stokes equations satisfies this property.

%%%%%%%%%%%%%%%%%%%%%%%%%%%%%%%%%%%%%%%%%%%%%%%%%%%%%%%%%%%%%%%%%%%%%%

\section{A Noetherianity criterion}

As already noted, a critical feature of the algebra of partial difference
polynomials $P = K[X(\Sigma)]$ is that some of its $\Sigma$-ideals are not only
infinitely generated as ideals but also infinitely $\Sigma$-generated.
One finds an immediate counterexample for $\Sigma = \langle \sigma \rangle$,
that is, in the ordinary difference case. In fact, for some fixed variable
$x_i\in X$ one has clearly that the ideal $I = \langle x_i(1)x_i(\sigma),
x_i(1)x_i(\sigma^2),\ldots \rangle_\Sigma$ has no finite $\Sigma$-basis.
For any $x_i\in X$ and for all $\sigma^j,\sigma^k\in\Sigma$ we have that
$\sigma^k\cdot x_i(\sigma^j) = x_i(\sigma^{k+j})$ and one can identify
$\sigma^k$ with the shift map $f_k:\N\to\N$ such that $f_k(j) = k + j$
which is a strictly increasing one. It is interesting to note that
if we consider the larger monoid $\Inc(\N)$ of all strictly increasing
maps $f:\N\to\N$ acting on $P$ as $f\cdot x_i(\sigma^j) = x_i(\sigma^{f(j)})$
then one has that $P$ is $\Inc(\N)$-Noetherian \cite{AH}. In other words,
any $\Inc(\N)$-ideal of $P$ has a finite $\Inc(\N)$-basis. We may say hence
that the monoid $\Sigma$ is ``too small'' to provide $\Sigma$-Noetherianity.

One way to solve this problem is to consider suitable quotients of the algebra
of partial difference polynomials where Noetherianity and a fortiori
$\Sigma$-Noetherianity is restored. A similar approach is used for the
free associative algebra which is also non-Noetherian where the concepts
of ``algebras of solvable type, PBW algebras, G-algebras'', etc naturally
arise (see for instance \cite{Lev}).

\subsection{Countably generated algebras}
\label{countalg}

We start now with a general discussion for (commutative) algebras generated
by a countable set of elements. Let $Y = \{y_1,y_2,\ldots\}$ be a countable
set and denote $P = K[Y]$ the polynomial algebra with variables set $Y$.
Since $P$ is a free algebra, all algebras generated by a countable set of
elements are clearly isomorphic to quotients $P' = P/J$, where $J$ is some
ideal of $P$. To control the cosets in $P'$, a standard approach consists
in defining a normal form modulo $J$ associated to a monomial ordering of $P$.
Subsequently, let $\prec$ be a monomial ordering of $P$ such that
$y_1\prec y_2\prec\ldots$.

\begin{definition}
Put $M = \Mon(P)$ and denote $M'' = M\setminus\lm(J)$. Moreover,
define the $K$-subspace $P'' = \langle M'' \rangle_K\subset P$. The elements
of $M''$ are called {\em normal monomials modulo $J$ (with respect to $\prec$)}.
The polynomials in $P''$ are said to be {\em in normal form modulo $J$}.
\end{definition}

Since $P$ is endowed with a monomial ordering, by a standard argument based
on the algorithm \Reduce\ applied for the set $J$ one obtains the following
result. 

\begin{proposition}
\label{macaulay}
A $K$-linear basis of the algebra $P'$ is given by the set $M' =
\{m + J\mid m\in M''\}$.
\end{proposition}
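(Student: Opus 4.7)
The plan is to prove the two standard requirements for a basis: that $M'$ spans $P'$ as a $K$-vector space, and that the elements of $M'$ are $K$-linearly independent. Both parts are routine consequences of having a monomial well-ordering together with the definition of $M''$.

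For spanning, I would take an arbitrary $f\in P$ and show that its class $f+J$ lies in $\Span_K(M')$ by producing a representative in $P''$. Apply the routine $\Reduce$ with input $f$ and the (possibly infinite) set of reducers $G=J\setminus\{0\}$: at each step the current polynomial $h$ satisfies $f-h\in\langle G\rangle=J$, and if $\lm(h)\in\LM(J)$ then some $g\in J$ has $\lm(g)\mid\lm(h)$, so the step $h\mapsto h-(\lt(h)/\lt(g))g$ is legitimate and produces an $h'$ with $\lm(h')\prec\lm(h)$. Termination follows because $\prec$ is a well-ordering, so the sequence of leading monomials cannot descend infinitely. The output $h$ satisfies $f-h\in J$ and either $h=0$ or $\lm(h)\notin\LM(J)$. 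Then I would observe that in fact \emph{every} monomial of $h$ lies in $M''$: if some monomial $n$ of $h$ were in $\lm(J)=\LM(J)\cap M$, applying the same elimination step to the term with monomial $n$ would strictly lower $n$, and by iterating this observation across all monomials of $h$ (the key point being that the set of monomials of $h$ is finite) one can arrange $h\in P''$. Hence $f+J=h+J\in\Span_K(M')$.

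For linear independence, I would take a finite relation $\sum_{i=1}^{s} c_i(m_i+J)=0$ in $P'$ with $c_i\in K$ and pairwise distinct $m_i\in M''$, i.e.\ $\sum_i c_i m_i\in J$. If some $c_i\neq 0$, let $m_{i_0}$ be the $\prec$-largest monomial among those with nonzero coefficient; then $\lm(\sum_i c_i m_i)=m_{i_0}\in\lm(J)$, contradicting $m_{i_0}\in M''=M\setminus\lm(J)$. Therefore all $c_i=0$, proving independence.

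The only genuine subtlety is justifying termination of the reduction and ensuring the final $h$ is fully supported on $M''$, since the variable set $Y$ is infinite and $J$ may itself be infinite. I would handle termination by appealing directly to property (ii) of Definition \ref{monord} (well-ordering), which guarantees there is no infinite strictly decreasing chain of leading monomials and which also applies termwise; the fact that only finitely many leading monomials of elements of $J$ can divide any given monomial is enough to make each individual choice inside $\Reduce$ well-defined, as already noted in the discussion following the algorithm. No further ingredients beyond the well-ordering are required.
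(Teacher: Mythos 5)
Your proof is correct and is precisely the ``standard argument based on the algorithm $\Reduce$ applied for the set $J$'' that the paper invokes without writing out: spanning via full reduction modulo $J$ (with termination from the well-ordering, most cleanly seen by noting that the $\prec$-largest non-normal monomial in the support strictly decreases at each step) and linear independence via the leading-monomial contradiction. No divergence from the paper's intended route.
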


\begin{definition}
Let $f\in P$. Denote $\NF(f)$ the unique element of $P''$ such that
$f - \NF(f)\in J$. In other words, one has $\NF(f) = \Reduce(f,J)$.
We call $\NF(f)$ the {\em normal form of $f$ modulo $J$ (with respect to $\prec$)}.
\end{definition}

By Proposition \ref{macaulay}, one has that the mapping $f + J\mapsto NF(f)$
defines a linear isomorphism between $P' = P/J$ and $P''= \langle M'' \rangle_K$.
An algebra structure is defined hence for $P''$ by imposing that such a mapping
is also an algebra isomorphism, that is, we define $f\cdot g = \NF(f g)$,
for all $f,g\in P''$. Then, we have a complete identification of $M'$ with $M''$
and $P'$ with $P''$, that is, we identify cosets with normal forms together
with their algebra structures. We will make use of this from now on.
We define hence the set of {\em normal variables}
\[
Y' = Y\cap M' = Y\setminus\lm(J).
\]
Clearly, normal variables depend strictly on the monomial ordering one uses in $P$.

\begin{proposition}[Noetherianity criterion]
\label{noethcrit}
Let $P$ be endowed with a monomial ordering. If the set of normal variables $Y'$
is finite then $P'$ is a Noetherian algebra.
\end{proposition}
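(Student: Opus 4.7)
The plan is to exhibit $P'$ as a quotient of the polynomial ring $K[Y']$; since $Y'$ is finite, this latter algebra is Noetherian by Hilbert's basis theorem, so any quotient of it is Noetherian as well.

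The key combinatorial step is to show that every normal monomial is built only from normal variables, that is, $M'' \subseteq \Mon(K[Y'])$. For this I would first observe that $\lm(J)$ is upward-closed under monomial divisibility: if $m = \lm(f)$ for some $0 \neq f \in J$ and $n \in M$, then $nf \in J$ and $\lm(nf) = nm$, so $nm \in \lm(J)$. Equivalently, the complement $M'' = M \setminus \lm(J)$ is downward-closed under divisibility. Hence if $m \in M''$ and a variable $y_i$ divides $m$, then $y_i \in M''$, i.e., $y_i \in Y \cap M'' = Y'$. Consequently every $m \in M''$ is a product of elements of $Y'$.

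Next I would show that the natural composition
\[
\iota : K[Y'] \hookrightarrow K[Y] = P \twoheadrightarrow P/J = P'
\]
is surjective. Given any $f \in P$, its normal form $\NF(f)$ lies in $P'' = \langle M'' \rangle_K$, which by the previous paragraph is contained in $K[Y']$; and since $f - \NF(f) \in J$, we have $f + J = \NF(f) + J \in \iota(K[Y'])$. Thus $\iota$ is surjective and $P' \cong K[Y']/\Ker(\iota)$.

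Since $Y'$ is finite by hypothesis, $K[Y']$ is a polynomial algebra in finitely many commuting variables and is therefore Noetherian by the Hilbert basis theorem. Any quotient of a Noetherian ring is Noetherian, so $P'$ is Noetherian, as required. The only real content is the combinatorial observation that $\lm(J)$ is upward-closed under divisibility; everything else is formal, and I do not anticipate any serious obstacle.
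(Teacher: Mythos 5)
Your proof is correct and follows essentially the same route as the paper's: the paper also reduces to the observation that normal monomials are products of normal variables, so that $P'$ is generated as a $K$-algebra by the finite set $Y'$ and is therefore Noetherian. You simply spell out the two steps the paper leaves implicit (the upward-closedness of $\lm(J)$ under divisibility, and the surjection $K[Y']\twoheadrightarrow P'$ via normal forms), which is fine.
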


\begin{proof}
It is sufficient to note that all normal monomials are products of normal
variables and therefore the quotient algebra $P' = P/J$ is in fact generated
by the set $Y'$. If $Y'$ is finite then $P'$ is a finitely generated
(commutative) algebra and hence it satisfies the Noetherian property.
\end{proof}

We need now to introduce the notion of \Gr\ basis for the ideals
of $P' = P/J$. After the identification of cosets with normal forms,
recall that $M' = M\setminus\lm(J)$ and $P' = \langle M' \rangle_K$
is a subspace of $P$ endowed with multiplication $f\cdot g = \NF(f g)$,
for all $f, g\in P'$. Then, all ideals $I'\subset P'$ have the form
$I' = I/J = \{ \NF(f)\mid f\in I\}$, for some ideal $J\subset I\subset P$.
Note that $\NF(f)\in I$ for any $f\in I$, which implies that in fact
$I' = I\cap P'$. Since the quotient algebra $P'/I'$ is isomorphic to $P/I$
and \Gr\ bases give rise to $K$-linear bases of normal monomials for the
quotients, one introduces the following definition.

\begin{definition}
\label{quogb}
Let $I' = I\cap P'$ be an ideal of $P'$ where $I$ is an ideal of $P$
containing $J$. Moreover, consider $G'\subset I'$. We call $G'$ a {\em \Gr\ basis}
of $I'$ if $G'\cup J$ is a \Gr\ basis of $I$.
\end{definition}

Let $G\subset P$. Recall that $\LM(G)$ denotes the ideal of $P$ generated
by the set $\lm(G) = \{\lm(g)\mid g\in G, g\neq 0\}$.

\begin{proposition}
\label{quogbchar}
Let $I'$ be an ideal of $P'$ and let $G'\subset I'$. Then, the set $G'$
is a \Gr\ basis of $I'$ if and only if $\LM(G') = \LM(I')$.
\end{proposition}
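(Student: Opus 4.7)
The plan is to translate the \Gr\ basis condition via Definition \ref{quogb} into an equality of leading-monomial ideals in $P$, and then exploit the combinatorial structure of $M'' = M \setminus \lm(J)$ to cancel the $\LM(J)$ contribution.

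First I would establish the set identity $\lm(I) = \lm(I') \cup \lm(J)$. The inclusion $\supseteq$ is clear from $I' \subseteq I$ and $J \subseteq I$. Conversely, take a nonzero $f \in I$; if $\lm(f) \in \lm(J)$ there is nothing to do, and if $\lm(f) \in M''$ then applying \Reduce\ to $f$ against $J$ never touches the leading term (no element of $\lm(J)$ divides $\lm(f)$), so $\NF(f)$ is nonzero with the same leading monomial. Since $\NF(f) = f - (\text{element of } J) \in I$ lies in $I \cap P' = I'$, we get $\lm(f) \in \lm(I')$. Passing to ideals in $P$ gives $\LM(I) = \LM(I') + \LM(J)$, while obviously $\LM(G' \cup J) = \LM(G') + \LM(J)$. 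Together with Definition \ref{quogb}, this rephrases the claim as: $G'$ is a \Gr\ basis of $I'$ iff $\LM(G') + \LM(J) = \LM(I') + \LM(J)$.

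The direction $(\Leftarrow)$ of the proposition is then immediate by adding $\LM(J)$ to both sides of the assumed equality. For $(\Rightarrow)$, the inclusion $G' \subseteq I'$ gives $\LM(G') \subseteq \LM(I')$ for free, so the task reduces to proving $\LM(I') \subseteq \LM(G')$. Pick a monomial $m \in \lm(I') \subseteq M''$; from $m \in \LM(I') + \LM(J) = \LM(G') + \LM(J)$ and the fact that the right-hand side is a monomial ideal, $m$ is divisible by some element of $\lm(G') \cup \lm(J)$. But $\lm(J)$ is closed under multiplication by $M$ (if $n = \lm(g)$ with $g \in J$ then $nt = \lm(tg) \in \lm(J)$ for any $t \in M$), so divisibility of $m$ by an element of $\lm(J)$ would force $m \in \lm(J)$, contradicting $m \in M''$. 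Hence $m$ is divisible by some element of $\lm(G')$, giving $m \in \LM(G')$.

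The only genuinely delicate point is this cancellation step, and its validity hinges on the defining property of normal monomials: no element of $M''$ is a multiple of any element of $\lm(J)$. Everything else is bookkeeping translating between the Definition \ref{quogb} formulation and the intrinsic statement of the proposition.
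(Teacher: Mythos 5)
Your argument is correct and rests on exactly the two ingredients the paper uses: for $f\in I$ with $\lm(f)\notin\LM(J)$ the normal form $\NF(f)\in I'$ keeps the same leading monomial, and a normal monomial cannot be divisible by any element of $\lm(J)$ since $\lm(J)$ is upward closed. Packaging these as the identity $\LM(I)=\LM(I')+\LM(J)$ before splitting into the two implications is only a cosmetic reorganization of the paper's proof, which verifies the two directions directly.
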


\begin{proof}
Let $J\subset I\subset P$ be an ideal such that $I' = I\cap P'$.
Assume $\LM(G') = \LM(I')$. Let $f\in I$ and denote $f' = \NF(f)$.
If $\lm(f)\notin\LM(J)$ then clearly $\lm(f) = \lm(f')$. Moreover,
since $\lm(f')\in\LM(I') \subset \LM(G')$ one has that $\lm(f) = \lm(f') =
m \lm(g')$, for some $m\in M, g'\in G'$. We conclude that $G'\cup J$ is a
\Gr\ basis of $I$. Suppose now that the latter condition holds. Since
$G'\subset I'$, we have clearly that $\LM(G')\subset \LM(I')$. Let now
$f'\in I'\subset I$. Then, there is $m\in M, g\in G'\cup J$ such that
$\lm(f') = m \lm(g)$. Since $\lm(f')\in M'$ then also $\lm(g)\in M'$ and
hence $g\in G'$. We conclude that $\LM(G') = \LM(I')$.
\end{proof}

\begin{proposition}
\label{normonid}
Assume that the set of normal variables $Y' = Y\cap M'$ is finite.
Then, any monomial ideal $I = \langle I\cap M' \rangle\subset P$ has a finite
basis.
\end{proposition}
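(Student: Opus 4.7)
The plan is to reduce the statement to Dickson's lemma on monomial ideals in finitely many variables, by observing that every normal monomial lies in the finitely generated subalgebra of $P$ spanned by the normal variables $Y'$.

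First I would establish the key observation that every monomial $m\in M'$ is a product of elements of $Y'$ only. Suppose $m\in M'$ and some variable $y_i$ dividing $m$ does not lie in $Y'$. Then $y_i\in\lm(J)$, so $y_i = \lm(g)$ for some $g\in J$, $g\neq 0$, and hence $m = (m/y_i)\,y_i = \lm((m/y_i)\,g)\in\lm(J)$, contradicting $m\in M' = M\setminus\lm(J)$. Therefore $M'\subset K[Y']$, where $K[Y']\subset P$ denotes the subalgebra generated by the finite set $Y'$.

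Next I would apply Dickson's lemma, or equivalently the Noetherianity of the finitely generated polynomial algebra $K[Y']$, to the generating set $I\cap M'\subset K[Y']$. This produces a finite subset $\{m_1,\ldots,m_s\}\subset I\cap M'$ such that every $m\in I\cap M'$ is divisible in $K[Y']$ (and hence in $P$) by at least one of the $m_i$.

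Finally I would verify that $\{m_1,\ldots,m_s\}$ is a basis of the ideal $I$ of $P$. The inclusion $\langle m_1,\ldots,m_s\rangle\subset I$ is immediate since each $m_i$ lies in $I$. For the reverse inclusion, the hypothesis $I = \langle I\cap M'\rangle$ reduces the problem to showing that every $m\in I\cap M'$ belongs to $\langle m_1,\ldots,m_s\rangle$, which follows at once from the divisibility $m_i\mid m$ secured in the previous step. The main content of the proof lies in the first observation, namely that normal monomials involve only normal variables; once this is in place, the finiteness of $Y'$ makes Dickson's lemma directly applicable, and I do not anticipate any serious obstacle.
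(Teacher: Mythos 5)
Your proof is correct and follows essentially the same route as the paper: the paper's proof simply invokes Dickson's Lemma after noting that normal monomials are products of the finitely many normal variables, which is exactly your argument. The only difference is that you spell out the (easy but worthwhile) verification that a normal monomial cannot be divisible by a non-normal variable, a fact the paper takes for granted here and in Proposition \ref{noethcrit}.
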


\begin{proof}
It is sufficient to invoke Dickson's Lemma (see for instance \cite{CLO})
for the ideal $I$ which is generated by normal monomials that are products
of a finite number of normal variables.
\end{proof}

\begin{corollary}
\label{quofingb}
If $Y'$ is a finite set then any ideal $I'\subset P'$ has a finite
\Gr\ basis.
\end{corollary}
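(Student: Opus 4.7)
The plan is to combine Proposition \ref{quogbchar} (which reduces Gröbner bases to equalities of leading monomial ideals) with Proposition \ref{normonid} (which gives finite generation for monomial ideals supported on $M'$).

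Concretely, I would start with an arbitrary ideal $I'\subset P'$ and consider its leading monomial ideal $\LM(I')=\langle\lm(I')\rangle\subset P$. Since $I'\subset P'=\langle M'\rangle_K$, every nonzero $f'\in I'$ has $\lm(f')\in M'$, so $\lm(I')\subset M'$ and hence $\LM(I')$ is a monomial ideal generated by elements of $M'$. In particular $\LM(I')=\langle\LM(I')\cap M'\rangle$, putting us exactly in the hypothesis of Proposition \ref{normonid}.

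Next, Proposition \ref{normonid} supplies a finite set of monomials in $M'$ generating $\LM(I')$. To promote this to a finite Gröbner basis of $I'$ itself, I would extract, via a Dickson-style minimalization, a finite subset $\{m_1,\ldots,m_s\}\subset\lm(I')$ whose elements generate $\LM(I')$: one can cover the finite generating set from the previous step by multiples of elements of $\lm(I')$, and since $\lm(I')\subset M'$ with $Y'$ finite, only finitely many distinct divisors occur. Choosing for each $m_i$ a witness $g_i'\in I'$ with $\lm(g_i')=m_i$ yields a finite set $G'=\{g_1',\ldots,g_s'\}\subset I'$ with $\LM(G')=\LM(I')$.

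Finally, applying Proposition \ref{quogbchar} to $G'$ shows that $G'$ is a Gröbner basis of $I'$ in the sense of Definition \ref{quogb}, and it is finite by construction. The only delicate point in this plan is the extraction step in the previous paragraph, where one must ensure the witnesses $g_i'$ can be chosen from $I'$ with the prescribed leading monomials; this is immediate from the definition of $\lm(I')$, but it is the single place where finiteness of $Y'$ is genuinely used (through Proposition \ref{normonid}), so it deserves to be stated explicitly rather than glossed over.
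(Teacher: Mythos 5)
Your proposal is correct and follows essentially the same route as the paper: the paper's proof likewise applies Proposition \ref{normonid} to the monomial ideal $\LM(I')$, generated by the normal monomials $\lm(I')$, and concludes via Proposition \ref{quogbchar}. You merely spell out the extraction of witnesses $g_i'\in I'$ with prescribed leading monomials, which the paper leaves implicit.
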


\begin{proof}
According to Proposition \ref{quogbchar}, consider the ideal $\LM(I')\subset P$
which is generated by the set of normal monomials $\lm(I')$. Then, it is sufficient
to apply Proposition \ref{normonid} to this ideal.
\end{proof}

It is clear that if $G$ is any \Gr\ basis of an ideal $J\neq P$ then
$Y' = Y\setminus\lm(G)$. Note that $Y$ is a countable set. Thus, if $Y'$ is finite
and hence $P' = K[Y']$ is a Noetherian algebra then $G$ needs to be an infinite
set. In general, such a \Gr\ basis cannot be computed but this may be possible
when $P'$ is a $\Sigma$-algebra owing to the notion of \Gr\ $\Sigma$-basis.

\subsection{$\Sigma$-algebras}

From now on, we assume again that $P = K[X(\Sigma)]$ is the algebra of partial
difference polynomials. Let $J\subset P$ be a $\Sigma$-ideal and define
the quotient $\Sigma$-algebra $P' = P/J$. As an algebra, we have clearly that
$P'$ is generated by the cosets $x_i(\sigma) + J$, for all $x_i(\sigma)\in X(\Sigma)$.
Moreover, $P'$ is a $\Sigma$-algebra which is $\Sigma$-generated by the cosets
$x_i(1) + J$, for any $x_i(1)\in X(1)$. In fact, $J$ is the $\Sigma$-ideal
containing all $\Sigma$-algebra relations satisfied by such generators.

Let $P$ be endowed with a monomial $\Sigma$-ordering $\prec$ and define,
as in Subsection \ref{countalg}, the set $M'\subset M = \Mon(P)$ of all
normal monomials and the set $X(\Sigma)' = X(\Sigma)\cap M'$ of all normal
variables. After the identification of cosets with normal forms,
we have that $P'$ is an algebra generated by $X(\Sigma)'$ because
normal monomials are products of normal variables.
One has also the following result.

\begin{proposition}
The $\Sigma$-algebra $P'$ is $\Sigma$-generated by $X(1)' = X(1)\cap M'$.
\end{proposition}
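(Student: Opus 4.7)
The plan is to reduce the claim to a short descent from $x_i(\sigma)$ to $x_i(1)$. Let $B \subset P'$ denote the $\Sigma$-subalgebra generated by $X(1)'$. Under the identification of cosets with normal forms made in Subsection \ref{countalg}, Proposition \ref{macaulay} shows that every element of $P'$ is a $K$-linear combination of normal monomials, and each normal monomial is by construction a product of normal variables $x_i(\sigma) \in X(\Sigma)'$. Hence it will suffice to prove that every such $x_i(\sigma) + J$ already lies in $B$.

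The essential observation is that the set $\lm(J)$ is itself $\Sigma$-invariant: if $g \in J$ and $\sigma \in \Sigma$, then $\sigma\cdot g \in J$ because $J$ is a $\Sigma$-ideal, and $\lm(\sigma\cdot g) = \sigma\cdot\lm(g)$ because $\prec$ is a monomial $\Sigma$-ordering. Consequently, if $x_i(1)$ were the leading monomial of some element of $J$, then so would be $x_i(\sigma) = \sigma\cdot x_i(1)$ for every $\sigma\in\Sigma$. Contrapositively, $x_i(\sigma) \notin \lm(J)$ forces $x_i(1) \notin \lm(J)$. Since a single variable belongs to the monomial ideal $\LM(J)$ if and only if it already belongs to $\lm(J)$, this yields the key implication
\[
x_i(\sigma) \in X(\Sigma)' \;\Longrightarrow\; x_i(1) \in X(1)'.
\]

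Combining the two steps: for any normal variable $x_i(\sigma) \in X(\Sigma)'$ we now have $x_i(1) \in X(1)'$, and in $P'$
\[
x_i(\sigma) + J \;=\; \sigma\cdot (x_i(1) + J) \;\in\; \Sigma\cdot X(1)' \;\subset\; B.
\]
Thus $B$ contains every normal variable, hence every normal monomial, hence all of $P'$, which is exactly the assertion that $P'$ is $\Sigma$-generated by $X(1)'$.

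I do not foresee a genuine obstacle: the whole content is the transfer of $\Sigma$-invariance from $J$ to its set of leading monomials, which is immediate from the hypothesis that $\prec$ is a monomial $\Sigma$-ordering. The only minor point to handle with care is the distinction between $\lm(J)$ and the monomial ideal $\LM(J)$, which is harmless here because both notions agree when restricted to single variables.
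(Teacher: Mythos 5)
Your argument is correct and follows essentially the same route as the paper: both proofs reduce the claim to showing $X(\Sigma)'\subset\Sigma\cdot X(1)'$ and obtain this from the $\Sigma$-invariance of the set of non-normal variables $X(\Sigma)\cap\lm(J)$, which you justify explicitly via $\lm(\sigma\cdot g)=\sigma\cdot\lm(g)$. Your extra remark distinguishing $\lm(J)$ from $\LM(J)$ is harmless but not needed, since the paper defines normal variables directly as $X(\Sigma)\setminus\lm(J)$.
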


\begin{proof}
It is sufficient to show that $X(\Sigma)'\subset \Sigma\cdot X(1)'$.
The set of non-normal variables $X(\Sigma)\setminus X(\Sigma)' =
X(\Sigma)\cap\lm(J)$ is clearly invariant under the action of $\Sigma$.
Therefore, if $x_i(1)$ is not a normal variable then $x_i(\sigma) =
\sigma\cdot x_i(1)$ is also not a normal one. In other words,
if $x_i(\sigma)$ is a normal variable then $x_i(1)$ is also such
a variable and one has that $x_i(\sigma) = \sigma\cdot x_i(1)$.
\end{proof}

To provide the Noetherian property to the quotient algebra $P' = P/J$
by means of Proposition \ref{noethcrit} one has the following key result.

\begin{proposition}[Finiteness criterion]
\label{normfincrit}
The set of normal variables $X(\Sigma)'$ is finite if and only if
for all $1\leq i\leq n, 1\leq j\leq r$ one has that
$x_i(\sigma_j^{d_{ij}})\in\lm(J)$, for some integers $d_{ij}\geq 0$.
\end{proposition}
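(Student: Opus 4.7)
The plan is to prove both implications by exploiting the $\Sigma$-invariance of the set of non-normal variables $X(\Sigma)\setminus X(\Sigma)' = X(\Sigma)\cap\lm(J)$. This invariance is immediate: since $J$ is a $\Sigma$-ideal and $\prec$ is a monomial $\Sigma$-ordering, we have $\sigma\cdot\lm(f) = \lm(\sigma\cdot f)$ for every nonzero $f\in J$, so $\Sigma\cdot\lm(J)\subset\lm(J)$.

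For the implication $(\Rightarrow)$, I would argue by contrapositive. Suppose there exist $i,j$ such that $x_i(\sigma_j^k)\notin\lm(J)$ for every $k\geq 0$. Then the variables $x_i(\sigma_j^0), x_i(\sigma_j^1), x_i(\sigma_j^2),\ldots$ are pairwise distinct elements of $X(\Sigma)'$, so $X(\Sigma)'$ is infinite.

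For the implication $(\Leftarrow)$, I would show directly that the assumed condition forces $X(\Sigma)'$ to be contained in a finite set. Fix $i$ and let $d_{i1},\ldots,d_{ir}\geq 0$ be integers with $x_i(\sigma_j^{d_{ij}})\in\lm(J)$ for each $j$. Given any $\sigma = \sigma_1^{\alpha_1}\cdots\sigma_r^{\alpha_r}\in\Sigma$ with $\alpha_j\geq d_{ij}$ for at least one index $j$, write $\sigma = \tau\,\sigma_j^{d_{ij}}$ with $\tau = \sigma_1^{\alpha_1}\cdots\sigma_j^{\alpha_j - d_{ij}}\cdots\sigma_r^{\alpha_r}\in\Sigma$. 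Then $x_i(\sigma) = \tau\cdot x_i(\sigma_j^{d_{ij}})\in\Sigma\cdot\lm(J)\subset\lm(J)$, so $x_i(\sigma)\notin X(\Sigma)'$. Consequently, the normal variables of the form $x_i(\sigma)$ are confined to the finite box $\{x_i(\sigma_1^{\alpha_1}\cdots\sigma_r^{\alpha_r}) \mid 0\leq\alpha_j<d_{ij}\text{ for all }j\}$, whose cardinality is at most $\prod_{j=1}^r d_{ij}$. Summing over $i=1,\ldots,n$ gives the finite bound $|X(\Sigma)'|\leq\sum_{i=1}^n\prod_{j=1}^r d_{ij}$.

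There is no genuine obstacle here; the only subtle point is the careful invocation of $\Sigma$-invariance of $\lm(J)$, which is precisely the feature that a monomial $\Sigma$-ordering guarantees. Once this is in place, the argument is a finite combinatorial packing bound in each ``coordinate'' $i$.
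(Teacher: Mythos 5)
Your proof is correct and follows essentially the same route as the paper's: both arguments rest on the $\Sigma$-invariance of $X(\Sigma)\cap\lm(J)$, which makes the exponent set $\{\sigma\in\Sigma\mid x_i(\sigma)\in\lm(J)\}$ a monomial ideal of $Q=K[\sigma_1,\ldots,\sigma_r]$ for each $i$. The only difference is that the paper then cites the classical criterion for $Q/I$ to be finite dimensional (pure powers $\sigma_j^{d_j}\in I$ for all $j$), whereas you prove that fact inline via the contrapositive in one direction and the box bound $\prod_j d_{ij}$ in the other, which makes the argument self-contained but is the same underlying mechanism.
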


\begin{proof}
Put $x_i(\Sigma) = \{x_i(\sigma)\mid \sigma\in\Sigma\}$ and denote
$x_i(\Sigma)' = x_i(\Sigma)\cap X(\Sigma)'$, for any $i=1,2,\ldots,n$.
We have then to characterize when $x_i(\Sigma)'$ is a finite set.
Consider the polynomial algebra $Q = K[\sigma_1,\ldots,\sigma_r]$
and a monomial ideal $I\subset Q$. It is well-known (see for instance
\cite{CLO}, Ch.~5, \S 3, Th.~6) that the quotient algebra $Q/I$
is finite dimensional if and only if there are integers $d_j\geq 0$
such that $\sigma_j^{d_j}\in I$, for all $j=1,2,\ldots,r$. It follows that
$x_i(\Sigma)'$ is a finite set if and only if there exist integers $d_{ij}\geq 0$
such that $x_i(\sigma_j^{d_{ij}})\in\lm(J)$, for all indices $i,j$.
\end{proof}

\begin{corollary}[Termination by membership]
\label{fingb}
Let $J\subset P$ be a $\Sigma$-ideal such that for all $1\leq i\leq n,
1\leq j\leq r$ there are integers $d_{ij}\geq 0$ such that
$x_i(\sigma_j^{d_{ij}})\in\lm(J)$. Then $J$ has a finite \Gr\
$\Sigma$-basis.
\end{corollary}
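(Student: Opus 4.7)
The plan is to reformulate the hypothesis via Proposition~\ref{normfincrit}, split the monomial $\Sigma$-ideal $\LM(J)$ into two parts that admit finite $\Sigma$-bases for different reasons, and finally lift leading monomials back to elements of $J$.

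By Proposition~\ref{normfincrit}, the hypothesis is equivalent to saying that the set of normal variables $X(\Sigma)' = X(\Sigma)\setminus\lm(J)$ is finite; write $Z = X(\Sigma)\setminus X(\Sigma)'$ for the $\Sigma$-stable set of non-normal variables. Every monomial $m\in M$ with at least one factor from $Z$ automatically belongs to $\lm(J)$, whereas a monomial lying in $\Mon(K[X(\Sigma)'])$ belongs to $\lm(J)$ if and only if it lies in the set $S := \lm(J)\cap\Mon(K[X(\Sigma)'])$. This yields the disjoint decomposition
\[
\lm(J) = \bigl(\langle Z\rangle\cap M\bigr)\,\sqcup\, S.
\]

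For the first piece, each set $E_i = \{\tau\in\Sigma : x_i(\tau)\in Z\}$ is $\Sigma$-stable and hence corresponds to a monomial ideal of $Q = K[\sigma_1,\ldots,\sigma_r]$; by Dickson's lemma $E_i$ is $\Sigma$-generated by a finite subset $T_i\subset E_i$, and the union $V := \{x_i(\tau) : 1\leq i\leq n,\ \tau\in T_i\}$ is a finite $\Sigma$-basis of $\langle Z\rangle$. For the second piece, $S$ is a monomial ideal of the Noetherian ring $K[X(\Sigma)']$ (which has only finitely many variables), and is therefore generated there by a finite set $\{s_1,\ldots,s_t\}\subset S$. Combining, every $m\in\lm(J)$ is a monomial multiple either of some $\sigma\cdot v$ with $v\in V$, $\sigma\in\Sigma$, or of some $s_k$; consequently $V\cup\{s_1,\ldots,s_t\}$ is a finite $\Sigma$-basis of the monomial $\Sigma$-ideal $\LM(J)$.

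To conclude, for each $w\in V\cup\{s_1,\ldots,s_t\}\subset\lm(J)$ I choose a polynomial $g_w\in J$ with $\lm(g_w) = w$; the finite set $G = \{g_w\}\subset J$ then satisfies $\lm(G) = V\cup\{s_1,\ldots,s_t\}$, a $\Sigma$-basis of $\LM(J)$, so $G$ is the required finite \Gr\ $\Sigma$-basis of $J$. The main subtle point is the decomposition step: unlike $\langle Z\rangle$, the ``normal'' piece $S$ lives inside a subring of $P$ that is not $\Sigma$-stable, and only after isolating it can its finite generation be derived from ordinary Noetherianity in finitely many variables rather than from a further $\Sigma$-finiteness argument.
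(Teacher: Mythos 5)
Your proof is correct and follows essentially the same route as the paper's: both split $\LM(J)$ into a piece generated by non-normal variables, which admits a finite $\Sigma$-basis, and a remainder generated by monomials in the finitely many normal variables, which is finitely generated by Dickson's lemma (Proposition~\ref{normonid}) --- exactly the paper's decomposition via $I=\langle x_i(\sigma_j^{d_{ij}})\rangle_\Sigma$ and the ideal $L/I$. The only differences are expository: you work with the full set $Z$ of non-normal variables (so you need one extra application of Dickson's lemma in $Q$ to produce its finite $\Sigma$-basis, where the paper just uses the given variables $x_i(\sigma_j^{d_{ij}})$), and you make explicit the final lifting of the leading monomials to elements of $J$, which the paper leaves implicit.
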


\begin{proof}
Denote $I = \langle x_i(\sigma_j^{d_{ij}})\mid 1\leq i\leq n,
1\leq j\leq r \rangle_\Sigma$ and $L = \LM(J)$. Then, we have that
$I\subset L$ and the ideal $L/I\subset P/I$ has a finite basis
owing to Proposition \ref{normonid} and Proposition \ref{normfincrit}.
In other words, the $\Sigma$-ideal $L$ has a finite $\Sigma$-basis
given by the finite $\Sigma$-basis of $I$ together with the finite
basis of $L/I$.
\end{proof}

Note that the above result is not a necessary condition for finiteness
of \Gr\ $\Sigma$-bases. Consider for instance the example presented
in Section 5 of \cite{LS}. Nevertheless, Corollary \ref{fingb}
guarantees termination of the procedure \SigmaGBasis\ when a complete set
of variables $x_i(\sigma_j^{d_{ij}})$ for all $i,j$, occurs as leading monomials
of some elements of the \Gr\ $\Sigma$-basis at some intermediate
step of the computation. In other words, reaching this condition
ensures that \SigmaGBasis\ will definitely stop at some later step.
Of course, if the elements $f_{ij}\in P$ such that $\lm(f_{ij}) =
x_i(\sigma_j^{d_{ij}})$ belong to the input $\Sigma$-basis of
a $\Sigma$-ideal $J\subset P$ then we know in advance that all properties
of Noetherianity and termination are provided for the quotient $P' = P/J$.
One may have that such polynomials are themselves a \Gr\ $\Sigma$-basis of $J$
and this happens in particular in the monomial case, that is, when $J =
\langle x_i(\sigma_j^{d_{ij}})\mid 1\leq i\leq n, 1\leq j\leq r \rangle_\Sigma$,
for some $d_{ij}\geq 0$. For all $d\geq 0$, define therefore
\[
J^{(d)} = \langle x_i(\sigma)\mid 1\leq i\leq n,
\deg(\sigma) = d + 1 \rangle_\Sigma \supset
\langle x_i(\sigma_j^{d+1})\mid 1\leq i\leq n,
1\leq j\leq r \rangle_\Sigma
\]
and put $J^{(-\infty)} = \langle X(1) \rangle_\Sigma = \langle X(\Sigma) \rangle$.
If $P = \bigoplus_{d\in\hN} P_d$ is the grading of $P$ defined by the order
function then the subalgebra $P^{(d)} = \bigoplus_{i\leq d} P_i\subset P$
is clearly isomorphic to the quotient $P/J^{(d)}$ and hence it can be endowed
with the structure of a $\Sigma$-algebra. Then, to make use of the following
filtration of subalgebras
\[
K = P^{(-\infty)}\subset P^{(0)}\subset P^{(1)}\subset \ldots \subset P
\]
to perform concrete computations with \Gr\ $\Sigma$-bases as explained
in Section 4 corresponds to work progressively modulo the $\Sigma$-ideals
\[
\langle X(\Sigma) \rangle = J^{(-\infty)}\supset J^{(0)}\supset J^{(1)}\supset
\ldots\supset 0
\]
providing the finite set of normal variables $X(\Sigma_d) = \{x_i(\sigma)\mid
1\leq i\leq n,\deg(\sigma)\leq d\}$ and hence the Noetherian property
for each quotient $P/J^{(d)}$ isomorphic to $P^{(d)}$. In other words,
termination by truncation is essentially a special instance of termination
by membership. Another interesting case is the ordinary one, that is,
when $\Sigma = \langle \sigma \rangle$. In this case, any set of polynomials
$f_1,\ldots,f_n\in P$ such that $\lm(f_i) = x_i(\sigma^{d_i})$ ($d_i\geq 0$)
is a \Gr\ $\Sigma$-basis since all S-polynomials trivially reduce to zero
according to the product criterion.

To motivate the last result of this section, let us consider the following
problem. Assume that $K$ is a field of constants and let $V$ be a finite
dimensional $K$-vector space. Denote by $\END_K(V)$ the algebra of
$K$-linear endomorphisms of $V$ and let $Q'\subset\END_K(V)$ be a subalgebra
generated by $r$ commuting endomorphisms. Since $Q = K[\sigma_1,\ldots,\sigma_r]$
is the free commutative algebra with $r$ generators, one has a $K$-algebra
homomorphism $Q\to \END_K(V)$ sending the $\sigma_i$ onto the generators
of $Q'$, that is, $V$ is a $Q$-module. Consider now
the (Noetherian) polynomial algebra $R$ whose variables are a $K$-linear
basis of $V$. In other words, $V$ is the subspace of linear forms of $R$
or equivalently $R$ is the symmetric algebra on $V$.
Define $\End_K(R)$ the monoid of $K$-algebra endomorphisms of $R$.
Since $\Sigma = \Mon(Q)$, we can extend the action of $\Sigma$ on $V$
to a monoid homomorphism $\Sigma\to \End_K(R)$, that is, $R$ is
a $\Sigma$-algebra. Because $P = K[X(\Sigma)]$ is a free $\Sigma$-algebra,
there is a suitable set $X = \{x_1,\ldots,x_n\}$ and a $\Sigma$-ideal
$J\subset P$ such that $R$ is isomorphic to the quotient $\Sigma$-algebra
$P' = P/J$. Since $Q$ acts linearly over $V$, one has that $J$ is
$\Sigma$-generated by linear polynomials. Then, in the following result
we analyze from the perspective of Proposition \ref{noethcrit} and
Proposition \ref{normfincrit} the easiest case for a linear $\Sigma$-ideal
providing the Noetherian property to the quotient $\Sigma$-algebra.
In Section 7 we will show that this case corresponds to have the finite
dimensional commutative algebra $Q'$ decomposable as the tensor product
of $r$ cyclic subalgebras. This happens in particular if $Q'$ is the group
algebra of a finite abelian group and one application of this specific case
is given in Section 8.

\begin{proposition}
\label{finlinact}
Let $K$ be a field of constants and consider the linear polynomials
$f_{ij} = \sum_{0\leq k\leq d_{ij}} c_{ijk} x_i(\sigma_j^k)\in P$
where $c_{ijk}\in K$ and $c_{ijd_{ij}} = 1$, for all $1\leq i\leq n,
1\leq j\leq r$. Then $\lm(f_{ij}) = x_i(\sigma_j^{d_{ij}})$ and the set
$\{f_{ij}\}$ is a \Gr\ $\Sigma$-basis.
\end{proposition}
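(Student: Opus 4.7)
The plan is to prove the two assertions in sequence, with the first being almost immediate and the second requiring the standard $\Sigma$-criterion together with an explicit computation of the one nontrivial class of S-polynomials.

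\textbf{Leading monomials.} Because $\prec$ is a monomial $\Sigma$-ordering, one has $\sigma\cdot m\succeq m$ for all $m\in M$, and strictness holds whenever $\sigma\cdot m\neq m$. Iterating, I get
\[
x_i(1)\prec x_i(\sigma_j)\prec x_i(\sigma_j^2)\prec\ldots\prec x_i(\sigma_j^{d_{ij}}).
\]
Since $c_{ijd_{ij}}=1$, this identifies $\lm(f_{ij})=x_i(\sigma_j^{d_{ij}})$.

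\textbf{Setup for the $\Sigma$-criterion.} By Proposition \ref{sigmacrit} it suffices to examine S-polynomials $\spoly(\sigma\cdot f_{ij},\tau\cdot f_{kl})$ with $\gcd(\sigma,\tau)=1$ and $\gcd(\sigma\cdot\lm(f_{ij}),\tau\cdot\lm(f_{kl}))\neq 1$. Both leading monomials are single variables, so the gcd condition forces $i=k$ and $\sigma\,\sigma_j^{d_{ij}}=\tau\,\sigma_l^{d_{il}}$. Writing $\sigma=\prod_m\sigma_m^{a_m}$ and $\tau=\prod_m\sigma_m^{b_m}$, the equality together with $\gcd(\sigma,\tau)=1$ forces $a_m=b_m=0$ for $m\neq j,l$; if $j=l$ it further forces $\sigma=\tau=1$ and then $\spoly(f_{ij},f_{ij})=0$, and if $j\neq l$ it forces $\sigma=\sigma_l^{d_{il}}$ and $\tau=\sigma_j^{d_{ij}}$. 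The only nontrivial case is therefore $i=k$, $j\neq l$ with this canonical choice of shifts.

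\textbf{The nontrivial S-polynomial.} Using that $K$ is a field of constants, I compute directly
\[
\sigma_l^{d_{il}}\cdot f_{ij}=\sum_{k=0}^{d_{ij}}c_{ijk}\,x_i(\sigma_j^k\sigma_l^{d_{il}}),\qquad
\sigma_j^{d_{ij}}\cdot f_{il}=\sum_{m=0}^{d_{il}}c_{ilm}\,x_i(\sigma_j^{d_{ij}}\sigma_l^m),
\]
both with leading monomial $x_i(\sigma_j^{d_{ij}}\sigma_l^{d_{il}})$. The key identity to verify, which drops out of a double-sum rearrangement using $c_{ijd_{ij}}=c_{ild_{il}}=1$, is
\[
\spoly(\sigma_l^{d_{il}}\!\cdot f_{ij},\ \sigma_j^{d_{ij}}\!\cdot f_{il})\;=\;\sum_{k=0}^{d_{ij}-1}c_{ijk}\,(\sigma_j^k\cdot f_{il})\;-\;\sum_{m=0}^{d_{il}-1}c_{ilm}\,(\sigma_l^m\cdot f_{ij}).
\]
Indeed, expanding the right-hand side, the terms with $m=d_{il}$ in the first block reproduce $\sum_{k<d_{ij}}c_{ijk}x_i(\sigma_j^k\sigma_l^{d_{il}})$, the terms with $k=d_{ij}$ in the second block reproduce $\sum_{m<d_{il}}c_{ilm}x_i(\sigma_j^{d_{ij}}\sigma_l^m)$, and the remaining cross-terms $c_{ijk}c_{ilm}x_i(\sigma_j^k\sigma_l^m)$ with $k<d_{ij}$, $m<d_{il}$ cancel between the two blocks.

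\textbf{\Gr\ representation.} Each summand on the right-hand side is a constant times an element of $\Sigma\cdot G$ with leading monomial $x_i(\sigma_j^k\sigma_l^{d_{il}})$ for $k<d_{ij}$, respectively $x_i(\sigma_j^{d_{ij}}\sigma_l^m)$ for $m<d_{il}$. These are precisely the monomials occurring in the S-polynomial itself, hence each is $\preceq\lm(\spoly)$, so the expression is a \Gr\ representation with respect to $\Sigma\cdot\{f_{ij}\}$. Applying Proposition \ref{sigmacrit} finishes the proof. The only step requiring genuine attention is the bookkeeping in the cancellation identity above; the rest is case analysis dictated by the shape of the leading monomials.
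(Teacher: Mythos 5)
Your proof is correct and follows essentially the same route as the paper: determine the leading monomials from the $\Sigma$-ordering, use the $\Sigma$-criterion together with the product criterion to reduce to the single family of S-polynomials $\spoly(\sigma_l^{d_{il}}\cdot f_{ij},\sigma_j^{d_{ij}}\cdot f_{il})$, and kill these by the double-sum cancellation that uses $K$ being a field of constants. You are somewhat more explicit than the paper in justifying why only these pairs $(\sigma,\tau)$ survive and in packaging the reduction as a closed-form \Gr\ representation, but these are presentational differences only.
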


\begin{proof}
Since $X(\Sigma)$ is endowed with a $\Sigma$-ordering, one has that
$x_i(\sigma_j^k)\prec x_i(\sigma_j^l)$ if $k < l$ and hence
$\lm(f_{ij}) = x_i(\sigma_j^{d_{ij}})$. Then, the only S-polynomials
to be considered are
\[
s = \spoly(\sigma_q^{d_{iq}}\cdot f_{ip}, \sigma_p^{d_{ip}}\cdot f_{iq}) =
\sum_{0\leq k<d_{ip}} c_{ipk} x_i(\sigma_q^{d_{iq}}\sigma_p^k) -
\sum_{0\leq l<d_{iq}} c_{iql} x_i(\sigma_p^{d_{ip}}\sigma_q^l),
\]
for all $1\leq i\leq n$ and $1\leq p<q\leq r$. By reducing $s$ with
polynomials $\sigma_p^k\cdot f_{iq}$ and $\sigma_q^l\cdot f_{ip}$
one obtains
\[
s' = -
\sum_{0\leq k<d_{ip},0\leq l<d_{iq}} c_{ipk} c_{iql} x_i(\sigma_q^l\sigma_p^k) +
\sum_{0\leq l<d_{iq},0\leq k<d_{ip}} c_{iql} c_{ipk} x_i(\sigma_p^k\sigma_q^l) = 0.
\]
\end{proof}

Note explicitely that the assumption that $K$ is a field of constants is necessary
in the above result. In fact, if $\Sigma$ acts on $K$ in a non-trivial way then
generally
\begin{equation*}
\begin{gathered}
s' = -
\sum_{0\leq k<d_{ip},0\leq l<d_{iq}} (\sigma_q^{d_{iq}}\cdot c_{ipk})
(\sigma_p^k\cdot c_{iql}) x_i(\sigma_q^l\sigma_p^k) \\
\qquad \,+ \sum_{0\leq l<d_{iq},0\leq k<d_{ip}} (\sigma_p^{d_{ip}}\cdot c_{iql})
(\sigma_q^l\cdot c_{ipk}) x_i(\sigma_p^k\sigma_q^l)\neq 0.
\end{gathered}
\end{equation*}

%%%%%%%%%%%%%%%%%%%%%%%%%%%%%%%%%%%%%%%%%%%%%%%%%%%%%%%%%%%%%%%%%%%%%%

\section{A Noetherian $\Sigma$-algebra of special interest}

From now on we assume that $K$ is a field of constants.
We define the ideal $J = \langle f_{ij} \rangle_\Sigma\subset P$ where
$f_{ij} =\sum_{0\leq k\leq d_{ij}} c_{ijk} x_i(\sigma_j^k)$ ($c_{ijk}\in K,
c_{ijd_{ij}} = 1$), for any $1\leq i\leq n,1\leq j\leq r$. We want
to describe the (Noetherian) $\Sigma$-algebra $P' = P/J$.
To simplify notations and since they are interesting in themselves,
we consider separately the cases when $r = 1$ and $n = 1$.

First assume that $r = 1$, that is, $\Sigma = \langle \sigma \rangle$ and hence
$P' = P/J$ where $J = \langle f_1,\ldots,f_n \rangle_\Sigma$ with $f_i =
\sum_{0\leq k\leq d_i} c_{ik} x_i(\sigma^k)$ ($c_{ik}\in K, c_{id_i} = 1$).
Define $Q = K[\sigma]$ the algebra of polynomials in the single variable
$\sigma$ and denote $g_i = \sum_{0\leq k\leq d_i} c_{ik} \sigma^k\in Q$.
Moreover, put $d = \sum_i d_i$ and let $V = K^d$. Finally, consider the
$d\times d$ block-diagonal matrix
\[
A = A_1 \oplus\ldots\oplus A_n =
\left(
\begin{array}{cccc}
A_1 &  0  & \ldots & 0 \\
 0  & A_2 & \ldots & 0 \\
\vdots & \vdots & & \vdots \\
 0  &  0  & \ldots & A_n \\
\end{array}
\right)
\]
where each block $A_i$ is the companion matrix of the polynomial $g_i$,
that is,
\[
A_i =
\left(
\begin{array}{cccccc}
0 & 0 & \ldots & 0 & - c_{i0} \\
1 & 0 & \ldots & 0 & - c_{i1} \\
0 & 1 & \ldots & 0 & - c_{i2} \\
\vdots & \vdots & \ddots & \vdots & \vdots \\
0 & 0 & \ldots & 1 & - c_{id-1} \\
\end{array}
\right).
\]
Note that $A$ has all entries in the base field $K$ and it can be considered
as the Frobenius normal form of a $d\times d$ matrix provided that
$g_1\mid\ldots\mid g_n$. Recall that any square matrix is similar
over the base field to its Frobenius normal form, that is, we are considering
any $K$-linear endomorphism of $V$.
Then, the monoid $\Sigma$ or equivalently the algebra $Q$ acts linearly
over the vector space $V$ by means of the representation $\sigma^k\mapsto A^k$.
If $\{v_q\}_{1\leq q\leq d}$ is the canonical basis of $V$, we denote 
$x_i(\sigma^k) = v_q$ where $q = \sum_{j < i} d_j + k + 1$ for all $1\leq i\leq n,
0\leq k < d_i$. We have hence $x_i(\sigma^k) = A^k x_i(1) = \sigma^k\cdot x_i(1)$.
In other words, for the $Q$-module $V$ one has the decomposition
$V = \bigoplus_i V_i$ where $V_i$ is the cyclic submodule generated
by $x_i(1)$ and annihilated by the ideal $\langle g_i \rangle\subset Q$.
Denote now by $R$ the (Noetherian) polynomial algebra generated by the
finite set of variables $X(\Sigma)' = \{x_i(\sigma^k)\mid 1\leq i\leq n,
0\leq k < d_i\}$, that is, $V$ coincides with the subspace of linear forms
of $R$. Then, one extends the action of the monoid $\Sigma =
\langle \sigma \rangle$ to the polynomial algebra $R$ in the natural way
that is by putting, for all $k\geq 0$ and $x_i(\sigma^j)\in X(\Sigma)'$
\[
\sigma^k\cdot x_i(\sigma^j) = A^k x_i(\sigma^j).
\]

Denote by $\END_K(P)$ the algebra of all $K$-linear mappings $P\to P$
and define by $\End_K(P)$ the monoid of $K$-algebra endomorphisms of $P$.
Note that the representation $\rho:\Sigma\to\End_K(P)$ can be extended
linearly to $\bar{\rho}:Q\to\END_K(P)$. Then, one has that $f_i =
\sum_k c_{ik} x_i(\sigma^k) = \sum_k c_{ik} \sigma^k\cdot x_i(1) =
g_i\cdot x_i(1)$, for all $i = 1,2,\ldots,n$. 

\begin{proposition}
\label{iso1}
If $\Sigma = \langle \sigma \rangle$ then the $\Sigma$-algebras $P',R$
are $\Sigma$-isomorphic.
\end{proposition}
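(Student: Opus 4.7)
The natural strategy is to build the isomorphism from $P$ downward by invoking the universal property of the free $\Sigma$-algebra (Proposition \ref{freeobj}). The plan is to define a map $f:X\to R$ by $x_i\mapsto x_i(1)$, where $x_i(1)\in V$ is viewed as a linear form (and hence as a variable) of the polynomial algebra $R$. Since $R$ has a natural $\Sigma$-algebra structure via the rule $\sigma^k\cdot x_i(\sigma^j)=A^k x_i(\sigma^j)$, Proposition \ref{freeobj} yields a unique $\Sigma$-algebra homomorphism $\varphi:P\to R$ with $\varphi(x_i(1))=x_i(1)$. Being a $\Sigma$-homomorphism, $\varphi$ sends $x_i(\sigma^k)=\sigma^k\cdot x_i(1)$ to $A^k x_i(1)$; for $0\leq k<d_i$ this agrees with the element $x_i(\sigma^k)\in X(\Sigma)'\subset R$, so the image of $\varphi$ contains every generator of $R$ as an algebra, proving surjectivity.

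Next I would show that $J\subset \Ker\varphi$, so that $\varphi$ descends to a $\Sigma$-algebra homomorphism $\bar\varphi: P'=P/J\to R$. Applying $\varphi$ to the generator $f_i=\sum_{0\leq k\leq d_i} c_{ik} x_i(\sigma^k)$ gives
\[
\varphi(f_i)=\sum_{0\leq k\leq d_i} c_{ik}\, A^k x_i(1)=g_i(A)\,x_i(1),
\]
which vanishes because $x_i(1)$ lies in the cyclic $Q$-submodule $V_i$ annihilated by $g_i$. Since $J$ is the $\Sigma$-ideal generated by the $f_i$ and $\varphi$ is a $\Sigma$-homomorphism, it follows at once that $J\subset \Ker\varphi$.

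For injectivity of $\bar\varphi$, the key observation is that by Proposition \ref{finlinact} the set $\{f_i\}$ is a \Gr\ $\Sigma$-basis of $J$ with $\lm(f_i)=x_i(\sigma^{d_i})$. Hence $\lm(J)$ is $\Sigma$-generated by the monomials $x_i(\sigma^{d_i})$, and by Proposition \ref{normfincrit} together with the description in Subsection \ref{countalg}, the set of normal monomials modulo $J$ is precisely the set of monomials in the finite variable set $X(\Sigma)'=\{x_i(\sigma^k)\mid 1\leq i\leq n,\,0\leq k<d_i\}$. By Proposition \ref{macaulay}, these normal monomials form a $K$-linear basis of $P'$. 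Under $\bar\varphi$ they are mapped bijectively to the corresponding monomials in $R$, which form a $K$-basis of the polynomial algebra $R=K[X(\Sigma)']$. Thus $\bar\varphi$ is a $K$-linear bijection, hence a $\Sigma$-algebra isomorphism.

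The main obstacle I anticipate is bookkeeping around the identification of $x_i(\sigma^k)\in P$ (a free variable) with $x_i(\sigma^k)\in R$ (a linear form in $V\subset R$ when $k<d_i$, or a $K$-linear combination of such forms when $k\geq d_i$): one must check that the action of $\Sigma$ on $R$ that was declared on $X(\Sigma)'$ does extend to a genuine $K$-algebra endomorphism of $R$ (so that Proposition \ref{freeobj} applies) and that it agrees under $\varphi$ with the action on $P$. Everything else reduces to the clean basis comparison above.
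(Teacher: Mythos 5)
Your proof is correct and rests on exactly the same ingredients as the paper's: Proposition \ref{finlinact} gives that $\{f_i\}$ is a \Gr\ $\Sigma$-basis of $J$, the normal variables are therefore $X(\Sigma)'$, and the computation $\varphi(x_i(\sigma^k))=A^k x_i(1)$ is the paper's $\NF(x_i(\sigma^k))=A^k x_i(1)$ in disguise. The only difference is packaging: you build the map top-down via the universal property of $P$ (Proposition \ref{freeobj}) and check $J\subset\Ker\varphi$ plus a basis comparison, whereas the paper identifies $P/J$ directly with the span of normal monomials and relegates the universal-property homomorphism (including the $d_i=0$ case, where $x_i(1)\mapsto 0$) to the remark immediately following the proposition.
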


\begin{proof}
By Proposition \ref{finlinact} we have that the set $\{f_i\}$ is a \Gr\
$\Sigma$-basis of the $\Sigma$-ideal $J\subset P$ and it is clear that
the set of normal variables modulo $J$ is exactly $X(\Sigma)' =
\{x_i(\sigma^k)\mid 1\leq i\leq n, 0\leq k < d_i\}$. Moreover, since
$R\subset P$ and $f_i = g_i\cdot x_i(1)$ one has that
$\NF(x_i(\sigma^k)) = \NF(\sigma^k\cdot x_i(1)) = A^k x_i(1)$, for all
$k\geq 0$ and $x_i(1)\in X(1)'$.
\end{proof}

Note that $R$ is $\Sigma$-generated by the set $X(1)' =
\{x_i(1)\mid 1\leq i\leq n, d_i > 0\}$. Since $P$ is a free $\Sigma$-algebra,
a surjective $\Sigma$-algebra homomorphism $\varphi:P\to R$ is defined
such that
\[
x_i(1)\mapsto
\left\{
\begin{array}{cl}
x_i(1) & \mbox{if}\ d_i > 0, \\
0 & \mbox{otherwise}.
\end{array}
\right.
\]
Then, the above result states that the $\Sigma$-ideal $\Ker\varphi\subset P$
of all $\Sigma$-algebra relations satisfied by the generating set $X(1)'\cup \{0\}$
of $R$ is exactly $J$.

Assume now that $n = 1$, that is, $X = \{x\}$ and $\Sigma =
\langle \sigma_1,\ldots,\sigma_r \rangle$. Then $P' = P/J$ where $J =
\langle f_1,\ldots,f_r \rangle_\Sigma$
with $f_j = \sum_{0\leq k\leq d_j} c_{jk} x(\sigma_j^k)$ ($c_{jk}\in K,
c_{jd_j} = 1$). Define $Q = K[\sigma_1,\ldots,\sigma_r]$ the algebra
of polynomials in the variables $\sigma_j$ and denote $g_j =
\sum_{0\leq k\leq d_j} c_{jk} \sigma_j^k\in Q$. One has clearly that
$f_j = g_j\cdot x(1)$. As before, we consider the companion matrix $A_j$
of the polynomial $g_j$ in the single variable $\sigma_j$.
If $d = \prod_j d_j$ then the monoid $\Sigma =
\Sigma_1\times\cdots\times\Sigma_r$ ($\Sigma_j = \langle \sigma_j \rangle$),
that is, the algebra $Q = Q_1 \otimes\cdots\otimes Q_r$ ($Q_j = K[\sigma_j]$)
acts linearly over the space $V = K^d$ by means of the representation
\[
\sigma_1^{k_1} \cdots \sigma_r^{k_r}\mapsto
A_1^{k_1} \otimes \cdots \otimes A_r^{k_r},
\]
where $A_1^{k_1} \otimes \cdots \otimes A_r^{k_r}$ denotes the Kronecker
product of the matrices $A_j^{k_j}$. In other words, the $Q$-module $V$
is the tensor product $V = V_1 \otimes\cdots\otimes V_r$ where $V_j$
is the cyclic $Q_j$-module defined by the representation $\sigma_j^k\mapsto A_j^k$.
If $\{v_{k_1} \otimes\cdots\otimes v_{k_r}\}_{1\leq k_j\leq d_j}$ is the
canonical basis of $V$, we put $x(\sigma_1^{k_1}\cdots\sigma_r^{k_r}) =
v_{k_1+1} \otimes\cdots\otimes v_{k_r+1}$, for all $1\leq j\leq r, 0\leq k_j< d_j$.
One has then
\[
x(\sigma_1^{k_1}\cdots\sigma_r^{k_r}) =
(A_1^{k_1} \otimes\ldots\otimes A_r^{k_r}) x(1) =
(\sigma_1^{k_1}\cdots\sigma_r^{k_r})\cdot x(1),
\]
that is, $V$ is a cyclic module generated by $x(1)$. Denote now by $R$
the polynomial algebra generated by the finite set of variables
$X(\Sigma)' = \{x(\sigma_1^{k_1}\cdots\sigma_r^{k_r})\mid 1\leq j\leq r,
0\leq k_j < d_j\}$, that is, $V$ is the subspace of linear forms of $R$.
Again, we extend the action of the monoid $\Sigma =
\langle \sigma_1,\ldots,\sigma_r \rangle$ to the polynomial algebra $R$
by putting, for all $k_1,\ldots,k_r\geq 0$ and $x(\sigma)\in X(\Sigma)'$
\[
(\sigma_1^{k_1} \cdots \sigma_r^{k_r})\cdot x(\sigma) =
(A_1^{k_1} \otimes \cdots \otimes A_r^{k_r}) x(\sigma).
\]

\begin{proposition}
If $X = \{x\}$ then $P',R$ are $\Sigma$-isomorphic.
\end{proposition}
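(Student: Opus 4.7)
The plan is to mimic Proposition \ref{iso1}, handling the extra structure coming from $r\geq 2$ commuting shifts via the tensor-product decomposition of $V$. First, applying Proposition \ref{finlinact} with $n=1$ yields that $\{f_1,\ldots,f_r\}$ is a \Gr\ $\Sigma$-basis of $J$ with $\lm(f_j)=x(\sigma_j^{d_j})$. Since each leading monomial is a single variable, a variable $x(\sigma_1^{k_1}\cdots\sigma_r^{k_r})$ lies in $\lm(J)$ if and only if $k_j\geq d_j$ for some $j$, so the normal variables modulo $J$ are exactly the finite set $X(\Sigma)' = \{x(\sigma_1^{k_1}\cdots\sigma_r^{k_r})\mid 0\leq k_j<d_j\}$ introduced just before the statement. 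Moreover, because the Gröbner basis has leading monomials that are plain variables, no further monomial relations are imposed on the normal variables; thus the identification of cosets with normal forms in Subsection \ref{countalg} produces a canonical $K$-algebra isomorphism between $P/J$ and the polynomial algebra $K[X(\Sigma)']=R$, sending $x(\sigma)+J$ to $x(\sigma)$ for each normal variable.

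The remaining task is to upgrade this to a $\Sigma$-isomorphism. By Proposition \ref{freeobj}, I take $\varphi:P\to R$ to be the unique $\Sigma$-algebra homomorphism with $\varphi(x(1))=x(1)\in R$, and aim to show $J\subseteq\Ker\varphi$ with the induced map $\bar\varphi:P/J\to R$ coinciding with the $K$-algebra isomorphism above. Since $f_j = g_j\cdot x(1)$ in $P$ and $\varphi$ intertwines the $\Sigma$-actions (extended $Q$-linearly), $\varphi(f_j) = g_j\cdot x(1)\in R$. Using $x(1)=v_1\otimes\cdots\otimes v_1$ and the definition of the action via Kronecker products, this equals $v_1\otimes\cdots\otimes g_j(A_j)v_1\otimes\cdots\otimes v_1$ with $g_j(A_j)$ in the $j$th tensor factor. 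Since $g_j$ is the characteristic polynomial of its own companion matrix $A_j$, Cayley--Hamilton gives $g_j(A_j)=0$, so $\varphi(f_j)=0$ as desired.

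To identify $\bar\varphi$ with the algebra isomorphism obtained in the first paragraph, I evaluate it on a normal variable $x(\sigma)+J$ with $\sigma=\sigma_1^{k_1}\cdots\sigma_r^{k_r}$ and $0\leq k_j<d_j$:
\[
\bar\varphi(x(\sigma)+J) = \sigma\cdot x(1) = (A_1^{k_1}\otimes\cdots\otimes A_r^{k_r})(v_1\otimes\cdots\otimes v_1) = v_{k_1+1}\otimes\cdots\otimes v_{k_r+1} = x(\sigma)\in R,
\]
where I use that $A_j^k v_1 = v_{k+1}$ for $k<d_j$, immediate from the shape of the companion matrix. Hence $\bar\varphi$ sends the monomial $K$-basis of $P/J$ in the normal variables bijectively onto the standard monomial basis of $R$, and is consequently a $\Sigma$-algebra isomorphism.

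The only nontrivial point is verifying $\varphi(f_j)=0$, which reduces to Cayley--Hamilton for each $A_j$; the essential conceptual ingredient beyond Proposition \ref{iso1} is the recognition that the $r$-fold Kronecker-product representation encodes exactly the tensor product of the cyclic $Q_j$-module structures, so the relations coming from distinct shift directions act independently on disjoint tensor factors.
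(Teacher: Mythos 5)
Your proof is correct and follows essentially the same route as the paper: Proposition \ref{finlinact} gives the \Gr\ $\Sigma$-basis $\{f_j\}$, the normal variables are identified as $X(\Sigma)'$, and the normal form of $x(\sigma_1^{k_1}\cdots\sigma_r^{k_r})$ is shown to be $(A_1^{k_1}\otimes\cdots\otimes A_r^{k_r})\,x(1)$ --- your universal-property framing via Proposition \ref{freeobj} and the Cayley--Hamilton verification that $\varphi(f_j)=0$ simply make explicit what the paper leaves implicit in its one-line normal-form computation. The only point you skip is the degenerate case where some $d_j=0$ (then $f_j=x(1)\in J$, $X(\Sigma)'=\emptyset$, and the paper just notes $P'=R=K$); there your assignment $\varphi(x(1))=x(1)$ is not available and must be replaced by $x(1)\mapsto 0$, but this is a trivial adjustment rather than a real gap.
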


\begin{proof}
Assume $d\neq 0$, that is, $d_j\neq 0$ for all $j$. Again, by Proposition
\ref{finlinact} one has that the set $\{f_j\}$ is a \Gr\ $\Sigma$-basis
of $J\subset P$ and the set of normal variables modulo $J$ is clearly
$X(\Sigma)' = \{x(\sigma_1^{k_1}\cdots\sigma_r^{k_r})\mid 1\leq j\leq r,
0\leq k_j < d_j\}$. Moreover, because $R\subset P$ and $f_j = g_j\cdot x(1)$
we obtain that, for all $k_1,\ldots,k_r\geq 0$
\[
\NF(x(\sigma_1^{k_1} \cdots \sigma_r^{k_r})) =
\NF((\sigma_1^{k_1} \cdots \sigma_r^{k_r})\cdot x(1)) =
(A_1^{k_1} \otimes\ldots\otimes A_r^{k_r}) x(1).
\]
Finally, if $d = 0$ then $P' = R = K$.
\end{proof}

Note that for $d\neq 0$ one has that $R$ is $\Sigma$-generated
by the element $x(1)$. Then, the above result implies that
the $\Sigma$-ideal $J\subset P$ coincides with the ideal
of $\Sigma$-algebra relations satisfied by the generator $x(1)$,
that is, it is the kernel of the $\Sigma$-algebra epimorphism $P\to R$
such that $x(1)\mapsto x(1)$.

Consider finally the general case for the $\Sigma$-algebra $P' = P/J$
where $J = \langle f_{ij} \rangle_\Sigma$ and $f_{ij} =
\sum_{0\leq k\leq d_{ij}} c_{ijk} x_i(\sigma_j^k)$
with $c_{ijk}\in K, c_{ijd_{ij}} = 1$, for all $1\leq i\leq n$ and
$1\leq j\leq r$. By combining the previous results, one may conclude
that such a structure arises from the $Q$-module $V = K^d$ where
$d = \sum_{1\leq i\leq n} \prod_{1\leq j\leq r} d_{ij}$ and the
representation is given by the mapping
\[
\prod_j \sigma_j^{k_j} \mapsto \bigoplus_i \bigotimes_j A_{ij}^{k_j}
\]
where $A_{ij}$ is the companion matrix of the polynomial
$g_{ij} = \sum_{0\leq k\leq d_{ij}} c_{ijk} \sigma_j^k$.
In other words, we have that $V = \bigoplus_i \bigotimes_j V_{ij}$ where
$V_{ij}$ is the cyclic $Q_j$-module annihilated by
the ideal $\langle g_{ij} \rangle\subset Q_j$. By denoting $x_i(1)$
the generator of the $Q$-module $\bigotimes_j V_{ij}$, we obtain that
$P'$ is isomorphic to the $\Sigma$-algebra $R = K[X(\Sigma)']$ where
$X(\Sigma)' = \{x_i(\sigma_1^{k_{i1}}\cdots\sigma_r^{k_{ir}})\mid
1\leq i\leq n, 1\leq j\leq r, 0\leq k_{ij} < d_{ij}\}$ is the canonical
basis of the space $V$. Then, one has that $J = \langle f_{ij} \rangle_\Sigma$
is exactly the $\Sigma$-ideal of $\Sigma$-algebra relations satisfied
by generating set $X(1)'\cup \{0\}$ of $R$.

%%%%%%%%%%%%%%%%%%%%%%%%%%%%%%%%%%%%%%%%%%%%%%%%%%%%%%%%%%%%%%%%%%%%%%

\section{Another example}

A long-lasting problem in \Gr\ bases theory is about the possibility
to accord the definition and the computation of such bases to some form
of symmetry, typically defined by groups, which one may have on the generators
or on the ideal itself of some polynomial algebra (see for instance
\cite{BF,Ga}). The main objection against this possibility is that monomial
orderings cannot be defined consistently with the group action which implies
that the symmetry disappears in the \Gr\ basis. In fact, if the symmetry
is defined by a monoid $\Sigma$ isomorphic to $\N^r$ we have found that
the notion of $\Sigma$-ideal perfectly accords with monomial orderings
and \Gr\ bases. Moreover, in the previous section we have shown that by means
of the notion of quotient $\Sigma$-algebra and the corresponding \Gr\ bases tools
one can deal with symmetries defined by suitable finite dimensional commutative
algebras. Among them one finds group algebras of finite abelian groups and
therefore this section is devoted to such a case. In other words, we will show
that \Gr\ bases of ideals having a finite abelian group symmetry can be ``tamed''
by means of $\Sigma$-algebras and their quotients.

We fix now a setting that has been recently considered in \cite{St}. Note that
in our approach all computations can be performed over any field (of constants)
but in \cite{St} the base field is required to contain roots of unity.
Fix $r = 1$, that is, $\Sigma = \langle \sigma \rangle$ and $Q = K[\sigma]$.
Consider $\Sym_d$ the symmetric group on $d$ elements and let $\gamma\in\Sym_d$
be any permutation. Denote $\Gamma = \langle \gamma \rangle \subset\Sym_d$
the cyclic subgroup generated by $\gamma$. Moreover, let $\gamma =
\gamma_1\cdots\gamma_n$ be the cycle decomposition of $\gamma$
and denote by $d_i$ the length of the cycle $\gamma_i$. Consider the polynomial
algebra $R = K[x_i(\sigma^j)\mid 1\leq i\leq n, 0\leq j < d_i]$ and identify
the subset $\{x_i(1),\ldots,x_i(\sigma^{d_i-1})\}$ with the support of
the cycle $\gamma_i$. Define $\Aut_K(R)$ the group of $K$-algebra automorphisms
of $R$. Clearly $R$ is a $\Gamma$-algebra, that is, there is a (faithful) group
representation $\rho':\Gamma\to\Aut_K(R)$. Consider now the polynomials
$g_i = \sigma^{d_i} - 1\in Q$ and define the $d\times d$ block-diagonal matrix
\[
A =
\left(
\begin{array}{cccc}
A_1 &  0  & \ldots & 0 \\
 0  & A_2 & \ldots & 0 \\
\vdots & \vdots & & \vdots \\
 0  &  0  & \ldots & A_n \\
\end{array}
\right)
\]
where each block $A_i$ is the companion matrix of the polynomial $g_i$
which is the permutation matrix
\[
A_i =
\left(
\begin{array}{cccccc}
0 & 0 & \ldots & 0 & 1 \\
1 & 0 & \ldots & 0 & 0 \\
0 & 1 & \ldots & 0 & 0 \\
\vdots & \vdots & \ddots & \vdots & \vdots \\
0 & 0 & \ldots & 1 & 0 \\
\end{array}
\right).
\]
If we order the variables of $R$ as $x_1(1),\ldots,x_1(\sigma^{d_1-1}),\ldots,
x_n(1),\ldots,x_n(\sigma^{d_n-1})$ then the representation $\rho'$ is defined
as $\gamma^k\cdot x_i(\sigma^j) = A^k x_i(\sigma^j)$, for all $i,j,k$.
In other words, by Proposition \ref{iso1} one has that $R$ is a $\Sigma$-algebra
isomorphic to $P' = P/J$ where $J = \langle f_1,\ldots,f_n \rangle_\Sigma$
and $f_i = g_i\cdot x_i(1) = x_i(\sigma^{d_i}) - x_i(1)\in P$. Consider now
a $\Gamma$-ideal (equivalently a $\Sigma$-ideal) $L' =
\langle h_1,\ldots,h_m \rangle_\Gamma\subset R$ and define the $\Sigma$-ideal
$L = \langle h_1,\ldots,h_m, f_1,\ldots,f_n \rangle_\Sigma\subset P$.  Note that
$\Gamma$-ideals are called ``symmetric ideals'' in \cite{St}. According with
Definition \ref{quogb} and the identification of $R$ with the quotient $P'$
one has that $G'\subset L'$ is a \Gr\ $\Gamma$-basis (equivalently $\Sigma$-basis)
of $L'$ if by definition $G'\cup \{f_1,\ldots,f_n\}$ is a \Gr\ $\Sigma$-basis
of $L$. In practice, the computation of $G'$ is obtained by the algorithm
$\SigmaGBasis$ which terminates owing to Corollary \ref{fingb}.

To illustrate the method we fix now $\gamma = (1 2 3 4 5 6 7 8)\in\Sym_8$
and $K = \Q$. To simplify the variables notation we identify $\Sigma$ with $\N$,
that is, $R = K[x(0),x(1),\ldots,x(7)]$. Consider the following $\Gamma$-ideal
of $R$
\begin{equation*}
\begin{gathered}
L' = \langle x(0)x(2) - x(1)^2, x(0)x(3) - x(1)x(2) \rangle_\Gamma = \\
\langle
x(0)x(2) - x(1)^2,
x(1)x(3) - x(2)^2,
x(2)x(4) - x(3)^2,
x(3)x(5) - x(4)^2, \\
x(4)x(6) - x(5)^2,
x(5)x(7) - x(6)^2,
x(7)^2 - x(0)x(6),
x(1)x(7) - x(0)^2, \\
x(0)x(3) - x(1)x(2),
x(1)x(4) - x(2)x(3),
x(2)x(5) - x(3)x(4), \\
x(3)x(6) - x(4)x(5),
x(4)x(7) - x(5)x(6),
x(6)x(7) - x(0)x(5), \\
x(0)x(7) - x(1)x(6),
x(2)x(7) - x(0)x(1)
\rangle.
\end{gathered}
\end{equation*}
Note that $x(0)x(2) - x(1)^2, x(1)x(3) - x(2)^2, x(0)x(3) - x(1)x(2)$
are well-known equations of the twisted cubic in $\P^3$. Define now
$f = x(8) - x(0)\in P$ and hence $R = P' = P/J$ where $J = \langle f \rangle_\Sigma$.
Then, a \Gr\ $\Gamma$-basis (or $\Sigma$-basis) of $L'$ is obtained by
computing a \Gr\ $\Sigma$-basis of the ideal
\[
L = \langle x(0)x(2) - x(1)^2, x(0)x(3) - x(1)x(2), f \rangle_\Sigma\subset P.
\]
Fix for instance the lexicographic monomial ordering on $P$ (hence on $R$)
with $x(0)\prec x(1)\prec\ldots$ which is clearly a $\Sigma$-ordering.
The usual minimal \Gr\ basis of $L'$ consists of 54 elements whose leading
monomials are
\begin{equation*}
\begin{gathered}
x(7)^2,
x(6)x(7), \\
x(0)x(2)\to x(1)x(3)\to x(2)x(4)\to x(3)x(5)\to x(4)x(6)\to x(5)x(7), \\
x(0)x(3)\to x(1)x(4)\to x(2)x(5)\to x(3)x(6)\to x(4)x(7),
x(2)x(7), \\
x(1)x(7),
x(0)x(7),
x(6)^3,
x(0)x(4)^2\to x(1)x(5)^2\to x(2)x(6)^2, \\
x(0)^2x(4)\to x(1)^2x(5)\to x(2)^2x(6)\to x(3)^2x(7),
x(0)^2x(6),
x(0)x(6)^2, \\
x(1)x(6)^2,
x(1)^2x(6),
x(3)^2x(4)\to x(4)^2x(5)\to x(5)^2x(6), \\
x(4)x(5)^2\to x(5)x(6)^2,
x(0)x(1)x(6),
x(0)x(4)x(5)\to x(1)x(5)x(6), \\
x(0)x(5)x(6),
x(1)x(2)x(6),
x(2)^4\to x(3)^4\to x(4)^4\to x(5)^4,
x(0)^3x(5), \\
x(0)x(5)^3,
x(2)^3x(3),
x(2)x(3)^3\to x(3)x(4)^3,
x(0)^2x(5)^2,
x(0)^2x(1)x(5), \\
x(2)^2x(3)^2,
x(1)^2x(2)^3,
x(1)^4x(2)^2,
x(1)^6x(2),
x(1)^8.
\end{gathered}
\end{equation*}
The arrow between two monomials means that a monomial can be obtained
by the previous one by means of the $\Sigma$-action. Then, the minimal \Gr\
$\Gamma$-basis of $L'$ has just 32 elements and their leading monomials are
\begin{equation*}
\begin{gathered}
x(7)^2,
x(6)x(7),
x(0)x(2),
x(0)x(3),
x(2)x(7),
x(1)x(7),
x(0)x(7),
x(6)^3, \\
x(0)x(4)^2,
x(0)^2x(4),
x(0)^2x(6),
x(0)x(6)^2,
x(1)x(6)^2,
x(1)^2x(6),
x(3)^2x(4), \\
x(4)x(5)^2,
x(0)x(1)x(6),
x(0)x(4)x(5),
x(0)x(5)x(6),
x(1)x(2)x(6),
x(2)^4, \\
x(0)^3x(5),
x(0)x(5)^3,
x(2)^3x(3)
x(2)x(3)^3,
x(0)^2x(5)^2,
x(0)^2x(1)x(5), \\
x(2)^2x(3)^2,
x(1)^2x(2)^3,
x(1)^4x(2)^2,
x(1)^6x(2),
x(1)^8.
\end{gathered}
\end{equation*}
In other words, our approach based on $\Sigma$-compatible structures is able
to define appropriately a \Gr\ basis that generates a group invariant
ideal up to the group action and this basis is actually more compact than
the usual \Gr\ basis. The elements of the minimal \Gr\ $\Gamma$-basis of $L'$
are the following ones
\begin{equation*}
\begin{gathered}
x(7)^2 - x(0)x(6),
x(6)x(7) - x(0)x(5),
x(0)x(2) - x(1)^2,
x(0)x(3) - x(1)x(2), \\
x(2)x(7) - x(0)x(1),
x(1)x(7) - x(0)^2,
x(0)x(7) - x(1)x(6),
x(6)^3 - x(0)x(5)^2, \\
x(0)x(4)^2 - x(2)x(3)^2,
x(0)^2x(4) - x(1)^2x(2),
x(0)^2x(6) - x(0)x(1)x(5), \\
x(0)x(6)^2 - x(1)x(5)x(6),
x(1)x(6)^2 - x(0)^2x(5),
x(1)^2x(6) - x(0)^3, \\
x(3)^2x(4) - x(0)x(1)^2,
x(4)x(5)^2 - x(0)x(1)x(5),
x(0)x(1)x(6) - x(2)^2x(3), \\
x(0)x(4)x(5) - x(0)^2x(1),
x(0)x(5)x(6) - x(3)x(4)^2, \\
x(1)x(2)x(6) - x(0)x(4)x(5),
x(2)^4 - x(0)^4,
x(0)^3x(5) - x(3)^3x(4), \\
x(0)x(5)^3 - x(3)x(4)^3,
x(2)^3x(3) - x(0)^3x(1),
x(2)x(3)^3 - x(0)x(1)^3, \\
x(0)^2x(5)^2 - x(2)^2x(3)^2,
x(0)^2x(1)x(5) - x(3)^2x(4)^2,
x(2)^2x(3)^2 - x(0)^2x(1)^2, \\
x(1)^2x(2)^3 - x(0)^5,
x(1)^4x(2)^2 - x(0)^6,
x(1)^6x(2) - x(0)^7,
x(1)^8 - x(0)^8.
\end{gathered}
\end{equation*}
We have computed these elements by applying the algorithm \SigmaGBasis\
to the $\Sigma$-ideal $L\subset P$ in the same way as for the example
in Section 5. For details about different strategies to implement this method
we refer to \cite{LS}.

\section{Conclusions and further directions}

In this paper we showed that a viable theory of \Gr\ bases exists for the algebra
of partial difference polynomials which implies that one can perform symbolic (formal)
computations for systems of partial difference equations. In fact, we prove that
such \Gr\ bases can be computed in a finite number of steps when truncated
with respect to an appropriate grading or when they contain elements
with suitable linear leading monomials. Precisely, since the algebras
of difference polynomials are free objects in the category of $\Sigma$-algebras
where $\Sigma$ is a monoid isomorphic to $\N^r$, we obtained the latter result
as a Noetherianity criterion for a class of finitely generated $\Sigma$-algebras.
Among such Noetherian $\Sigma$-algebras one finds polynomial algebras
in a finite number of variables where a tensor product of a finite number
of algebras generated by single matrices acts over the subspace of linear forms.
Considering that such commutative tensor algebras include group algebras
of finite abelian groups one obtains that there exists a consistent
\Gr\ basis theory for ideals of finitely generated polynomial algebras
that are invariant under such groups. In our opinion, this represents
an interesting step in the direction of development of computational methods
for ideals or algebras that are subject to group or algebra symmetries.

As for further developments, we may suggest that the study of important
structures related to \Gr\ bases like Hilbert series and free resolutions
should be developed in the perspective that their definition and computation
has to be consistent to the symmetry one defines eventually on a polynomial
algebra. An important work in this direction is contained in \cite{KLMP}.
Finally, the problem of studying conditions providing $\Sigma$-Noetherianity
(instead of simple Noetherianity) for finitely generated $\Sigma$-algebras 
is also an intriguing subject.

%%%%%%%%%%%%%%%%%%%%%%%%%%%%%%%%%%%%%%%%%%%%%%%%%%%%%%%%%%%%%%%%%%%%%%

\section*{Acknowledgments}

The authors would like to express their gratitude to the reviewers
for all valuable remarks that have helped to make the paper more readable.

%%%%%%%%%%%%%%%%%%%%%%%%%%%%%%%%%%%%%%%%%%%%%%%%%%%%%%%%%%%%%%%%%%%%%%
%%%%%%%%%%%%%%%%%%%%%%%%%%%%%%%%%%%%%%%%%%%%%%%%%%%%%%%%%%%%%%%%%%%%%%

\end{document}